\newtheorem{theorem}{Theorem}
\newtheorem{lemma}{Lemma}
\newtheorem{proposition}{Proposition}
\newtheorem{corollary}{Corollary}
\newtheorem{definition}{Definition}
\theoremstyle{remark}
\newtheorem*{remark}{Remark}
\title{On cyclicity in de Branges-Rovnyak spaces}
\author{Alex Bergman}                             
\date{\today}
\begin{document}

\maketitle

\begin{abstract}
  We study the problem of characterizing the cyclic vectors in de Branges-Rovnyak spaces. Based on a description of the invariant subspaces we show that the difficulty lies entirely in understanding the subspace $(aH^{2})^{\perp}$ and give a complete function theoretic description of the cyclic vectors in the case $\dim (aH^{2})^{\perp} < \infty$. Incidentally, this implies analogous results for certain generalized Dirichlet spaces $\mathcal{D}(\mu)$. Most of our attention is directed to the infinite case where we relate the cyclicity problem to describing the exposed points of $H^{1}$ and provide several sufficient conditions. A necessary condition based on the Aleksandrov-Clark measures of $b$ is also presented.
\end{abstract}

\section{Introduction}

This article is concerned with cyclic vectors in de Branges-Rovnyak spaces $\mathcal{H}(b)$. Let $\mathbb{D} = \left\{ z \in \mathbb{C} : \lvert z \rvert < 1 \right\}$ be the open unit disk in the complex plane and equip its boundary $\mathbb{T} = \left\{ \zeta \in \mathbb{C} : \lvert \zeta \rvert = 1\right\}$ with the normalized Lebesgue measure, $m$. For $0 < p < \infty$ the Hardy class $H^{p}$ is the set of analytic functions on $\mathbb{D}$ for which the norm (in the case $1 \leq p < \infty$)
\begin{equation*}
  \| f \|_{p}^{p} = \sup_{0 < r < 1} \int_{\mathbb{T}} \lvert f(r\zeta) \rvert^{p} dm(\zeta),
\end{equation*}
is finite. For $p = \infty$ we let $H^{\infty}$ be the class of bounded analytic functions on $\mathbb{D}$. We can in the usual way identify $H^{p}$ with a subspace of $L^{p} = L^{p}(\mathbb{T})$ via non-tangential limits. In this setting $H^{p}$ (with $p \geq 1$) consists of all $L^{p}$ functions, whose Fourier spectrum is contained in the nonnegative integers. Denote by $P_{+}$ the orthogonal projection from $L^{2}$ onto $H^{2}$. For a symbol $U \in L^{\infty}$ we define the Toeplitz operator $T_{U}f = P_{+}(Uf)$, $f \in H^{2}$.

Let $b$ be a nonconstant function in the unit ball of $H^{\infty}$. The de Branges-Rovnyak space $\mathcal{H}(b)$ is defined as the operator range of $(I - T_{b}T_{\overline{b}})^{1/2}$. Our main reference for the basic theory of $\mathcal{H}(b)$ is \cite{MR1289670}, see also the recent two-part monograph \cites{MR3497010, MR3617311}. The space $\mathcal{H}(b)$ is contractively contained inside the usual Hardy space, $H^{2}$. The theory of $\mathcal{H}(b)$ spaces splits into two cases depending on whether or not $b$ is an extreme point of the unit ball of $H^{\infty}$. De Branges-Rovnyak spaces have been studied intensively. In particular, the problem of smooth approximation has received attention in the case of extreme $b$ in recent years, see for example \cites{MR3947672, MR3693507, MR4521737, https://doi.org/10.48550/arxiv.2111.14112, https://doi.org/10.48550/arxiv.2108.08625}. In the non-extreme case polynomials form a dense subset and so the question of smooth approximation is trivial. However, non-extreme de Branges-Rovnyak spaces are forward shift invariant and hence the question of characterizing the cyclic vectors is meaningful in this case. For a function $f$ in a Hilbert space of analytic functions $H$ invariant under the forward shift operator, $M_{z}f = zf$, the cyclic subspace generated by $f$ is defined as the closure of the linear span of polynomial multiples of $f$. It will be denoted $[f]$ and a function $f$ is called cyclic (for $H$) if $[f] = H$. Before stating our goals we will need some definitions.

If $\mathcal{H}(b)$ is invariant under the forward shift operator defined by $M_{z}f = zf$ it is known that $b$ is a non-extreme point of the unit ball of $H^{\infty}$, that is
\begin{equation*}
  \int_{\mathbb{T}} \log (1 - |b(\zeta)|) dm(\zeta) > -\infty.
\end{equation*}
Thus the problem of classifying the cyclic vectors for $\mathcal{H}(b)$ is meaningful only in the non-extreme case.

For a non-extreme $b$ we define the unique outer function $a(0) > 0$ satisfying $\lvert b \rvert^{2} + \lvert a \rvert^{2} = 1$ a.e. on $\mathbb{T}$. The space $aH^{2} = \left\{ ah : h \in H^{2} \right\}$ is contractively contained inside $\mathcal{H}(b)$. The problem of classifying the cyclic vectors in de Branges-Rovnyak spaces was raised by Fricain in \cite{MR3589677}. Also in \cite{MR3309352} the cyclic vectors in the case $b = (1+z)/2$ were determined. In Section \ref{sec:suff} we generalize this considerably by giving a complete function theoretic characterization of the cyclic vectors in the case $\dim(aH^{2})^{\perp} < \infty$ (the symbol $(aH^{2})^{\perp}$ denotes the orthogonal complement of $aH^{2}$ in $\mathcal{H}(b)$).
    \begin{theorem}\label{thm:finite_defect}
      Let $\mathcal{H}(b)$ be a non-extreme de Branges-Rovnyak space and suppose $\dim(aH^{2})^{\perp} < \infty$. Denote by $\overline{\lambda}_{1}, \overline{\lambda}_{2}, ..., \overline{\lambda}_{s}$ the eigenvalues of $M_{z}^{*}$ restricted to $(aH^{2})^{\perp}$. Then
      \begin{enumerate}[(i)]
        \item each $\overline{\lambda}_{j}$ lies on $\mathbb{T}$,
        \item every function $h \in \mathcal{H}(b)$ extends non-tangentially to $\lambda_{j}$,
        \item $f \in \mathcal{H}(b)$ is cyclic if and only if $f$ is outer and $f(\lambda_{j}) \neq 0$, for all $j = 1, 2, ..., s$.
        \end{enumerate}
      \end{theorem}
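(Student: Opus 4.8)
The plan is to combine the known structural description of the invariant subspaces of a non-extreme $\mathcal{H}(b)$ with a finite-dimensional analysis of the defect space $(aH^2)^\perp$. The starting point is that $aH^2$ is itself invariant under $M_z$ and that, for $f \in aH^2$, the cyclic subspace $[f]$ taken inside $\mathcal{H}(b)$ agrees (after accounting for the equivalence of norms on $aH^2$) with the cyclic subspace of $f$ inside $H^2$ weighted appropriately; in particular $af_0$ with $f_0$ outer should be cyclic for $aH^2$. So the whole problem reduces, as the abstract promises, to understanding how a candidate cyclic vector $f$ interacts with the finitely many extra dimensions in $(aH^2)^\perp$. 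Concretely, I would first establish (i) and (ii): since $\dim(aH^2)^\perp < \infty$ and this space is $M_z^*$-invariant, $M_z^*$ restricted to it is a finite matrix whose eigenvalues $\overline{\lambda}_j$ must — because $aH^2$ is the closure of polynomial multiples of $a$ and the quotient is finite-dimensional — correspond to point evaluations that survive in $\mathcal{H}(b)$; the reproducing kernels $k^b_{\lambda_j}$ lie in $(aH^2)^\perp$, and finiteness forces $|\lambda_j| = 1$ (an interior eigenvalue would give an infinite chain, or contradict that $aH^2 \ominus$-complement is spanned by boundary kernels). Non-tangential extension to each $\lambda_j$ for every $h \in \mathcal{H}(b)$ then follows from boundedness of the functional $h \mapsto \langle h, k^b_{\lambda_j}\rangle$ together with the standard fact that $\mathcal{H}(b)$-convergence plus kernel reproduction yields non-tangential limits at such boundary points.

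For (iii), the forward direction is the easy one: if $f$ is cyclic then $[f] = \mathcal{H}(b) \supseteq aH^2$, and since $aH^2$-cyclicity of $f$ forces $f$ outer (an inner factor would persist, as multiplication by it is contractive), $f$ must be outer; and if $f(\lambda_j) = 0$ for some $j$ then every $p f$ vanishes at $\lambda_j$, hence so does every element of $[f]$ by continuity of evaluation at $\lambda_j$, so $k^b_{\lambda_j} \perp [f]$ and $f$ is not cyclic. The substantive direction is the converse. Here the plan is: assume $f$ is outer with $f(\lambda_j) \neq 0$ for all $j$. I would show $[f] \supseteq aH^2$ first. Pick $g = a f_0$ outer; one wants to approximate, say, the constant function $a$ (or $a$ times a dense set) by polynomial multiples of $f$. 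Since $f$ is outer, $1/f$ is a well-defined analytic function and the obstruction is purely integrability/boundary behavior; multiplying $f$ by polynomials $p_n$ chosen so that $p_n f \to a \cdot(\text{something})$ in $H^2$-norm uses outerness of both $f$ and $a$ in the standard way (approximate $1/f$ by polynomials in a weighted sense, or invoke that outer functions are cyclic in $H^2$ and that $aH^2 \subseteq \mathcal{H}(b)$ contractively so $H^2$-convergence of $a h_n$ gives $\mathcal{H}(b)$-convergence). This gives $aH^2 \subseteq [f]$. It then remains to climb the finite-dimensional quotient: $[f]/aH^2$ is a subspace of the $s$-dimensional space $(aH^2)^\perp$ (modulo the contractive-containment bookkeeping), invariant under the compression of $M_z$, and it contains the image of $f$, which is nonzero in each eigendirection precisely because $f(\lambda_j) \neq 0$ (the projection of $f$ onto the $k^b_{\lambda_j}$ direction is, up to a nonzero constant, $f(\lambda_j)$). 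A cyclic-vector argument for the finite matrix $M_z^*|_{(aH^2)^\perp}$ — whose eigenvalues $\overline{\lambda}_j$ one may first reduce to the case of distinct $\lambda_j$, or handle Jordan blocks directly — then shows that polynomial multiples of (the class of) $f$ already fill out all of $(aH^2)^\perp$, so $[f] = \mathcal{H}(b)$.

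The main obstacle I anticipate is the bookkeeping in the step ``$aH^2 \subseteq [f]$ implies $[f] = \mathcal{H}(b)$'' when $M_z^*|_{(aH^2)^\perp}$ has nontrivial Jordan structure: one must check that the condition ``$f(\lambda_j)\neq 0$'' is exactly the right non-degeneracy condition, i.e. that it is equivalent to the class of $f$ being a cyclic vector for the (transpose of the) companion-type matrix, and that there is no hidden obstruction coming from higher-order data (values of derivatives of $h\in\mathcal H(b)$ at $\lambda_j$) when a block has size $>1$. I would handle this by identifying $(aH^2)^\perp$ explicitly as a span of kernels $k^b_{\lambda_j}$ and their ``derivative kernels,'' using the model-space--like description of $(aH^2)^\perp$ available in the non-extreme theory, and then the cyclicity condition becomes the classical statement that a vector is cyclic for a nilpotent-plus-scalar Jordan block iff its top (or bottom) coordinate is nonzero, which corresponds to $f(\lambda_j)\neq 0$. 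A secondary technical point is making the first approximation step — pushing constants or $H^2$ into $[f]$ via outerness of $f$ — fully rigorous in the $\mathcal{H}(b)$ norm rather than just the $H^2$ norm; contractive containment of $aH^2$ resolves this, but one should state it carefully.
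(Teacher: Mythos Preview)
Your overall strategy for (iii) matches the paper's: first show $aH^{2}\subset [f]$ for $f$ outer (this is the paper's Proposition on cyclic subspaces, and your sketch via contractive containment of $aH^{2}$ is essentially that argument), then reduce to a finite-dimensional linear-algebra computation on $(aH^{2})^{\perp}$ using the Jordan structure of $M_{z}^{*}$. The paper carries this out in the dual form to your quotient picture: it assumes $h\perp z^{n}f$ for all $n$, reduces to $h\in (aH^{2})^{\perp}$, expands $h$ in a Jordan basis $\{k_{\lambda_{j}}^{l}\}$, and tests against the polynomials $p_{k}(z)=(\lambda_{k}-z)^{m_{k}-1}\prod_{j\neq k}(\lambda_{j}-z)^{m_{j}}$ to kill the top coefficient in each block, then iterates downward. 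This resolves precisely the Jordan-block worry you flag: because $(\overline{\lambda}_{k}-M_{z}^{*})^{m_{k}-1}$ pushes any generalized eigenvector down to a nonzero multiple of $k_{\lambda_{k}}^{b}$, only the value $f(\lambda_{k})$ enters, and no higher-order derivative data on $f$ is needed.

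There is, however, a genuine gap in your argument for (i). Neither branch of your disjunction works. An interior eigenvalue $\overline{\lambda}$ with $|\lambda|<1$ produces a one-dimensional $M_{z}^{*}$-eigenspace spanned by $k_{\lambda}^{b}$, not an ``infinite chain'': the $M_{z}^{*}$-orbit of an eigenvector is one-dimensional, and you have no mechanism forcing higher derivative kernels into $(aH^{2})^{\perp}$. And the alternative --- that $(aH^{2})^{\perp}$ is spanned by \emph{boundary} kernels --- is exactly the claim to be proved, so invoking it is circular. The paper's argument is short and of a different nature: if $\lambda\in\mathbb{D}$ and $k_{\lambda}^{b}\in (aH^{2})^{\perp}$, then $a(\lambda)=\langle a,k_{\lambda}^{b}\rangle_{b}=0$, contradicting that $a$ is outer. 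Replace your (i) with this. (A minor slip: $(aH^{2})^{\perp}$ has dimension $n=\sum_{j}m_{j}$, not $s$; you write ``$s$-dimensional'' once but then correctly allow Jordan blocks.)
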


\begin{remark}
    Sarason has shown that $\dim(aH^{2})^{\perp} < \infty$ if and only if the function $\phi = a/(1-b)$ is of the form $\phi = Fp$, where $F \in H^{2}$ is an outer function, such that $F^{2}$ is an exposed point of $H^{1}$ (exposed points will be defined shortly) and $p$ is a polynomial with all of its zeros on the unit circle, see (X-17) in \cite{MR1289670}.
    \end{remark}
    Thus Theorem \ref{thm:finite_defect} contains the case of rational $b$ often considered in the literature as a special case.
    
    Our main efforts will go towards the case of infinite codimension, $\dim(aH^{2})^{\perp} = \infty$. This case seems drastically more difficult than the finite case. In light of this, we settle for providing a necessary and several sufficient conditions.

  Using a recent description of the invariant subspaces of $\mathcal{H}(b)$ in \cite{MR3947672} we prove the following basic condition for cyclicity valid for any non-extreme de Branges-Rovnyak space.

For a Borel set $E \subset \mathbb{T}$ denote by $\mathbbm{1}_{E}$ the function that is $1$ on $E$ and $0$ on $\mathbb{T} \setminus E$.
\begin{theorem}\label{thm:thmA}
  Let $\mathcal{H}(b)$ be a non-extreme de Branges-Rovnyak space and $f \in \mathcal{H}(b)$ an outer function. Suppose there exists Borel sets $E, F \subset \mathbb{T}$, such that
  \begin{enumerate}[(i)]
    \item $E \cup F = \mathbb{T}$,
    \item $a^{-1}\mathbbm{1}_{E} \in L^{2}$ and $f^{-1}\mathbbm{1}_{F} \in L^{\infty}$.
  \end{enumerate}
  Then $f$ is cyclic.
\end{theorem}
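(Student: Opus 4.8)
The plan is to show $[f]=\mathcal{H}(b)$ by proving that the constant function $1$ lies in $[f]$; since the polynomials are dense in a non-extreme $\mathcal{H}(b)$ we have $[1]=\mathcal{H}(b)$, so this suffices. The starting point is the reduction that the argument hinges on: because $f$ is outer, the description of the invariant subspaces of $\mathcal{H}(b)$ in \cite{MR3947672} forces $aH^{2}\subseteq[f]$ (this is the precise sense in which, as advertised, the difficulty lies entirely in $(aH^{2})^{\perp}$). Consequently it is enough to produce polynomials $q_{n}$ and functions $\gamma_{n}\in H^{2}$ with $q_{n}f+a\gamma_{n}\to 1$ in $\mathcal{H}(b)$, since then $q_{n}f\in[f]$ trivially while $a\gamma_{n}\in aH^{2}\subseteq[f]$. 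Dually, one may instead show that every $g\in\mathcal{H}(b)$ with $g\perp aH^{2}$ and $\langle z^{n}f,g\rangle_{\mathcal{H}(b)}=0$ for all $n\ge 0$ must vanish, i.e.\ that $P_{(aH^{2})^{\perp}}f$ is a cyclic vector for the compression of $M_{z}$ to $(aH^{2})^{\perp}$.

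After replacing $F$ by $F\setminus E$ we may assume $E\cap F=\emptyset$, so that $\mathbbm{1}_{E}+\mathbbm{1}_{F}=1$. The construction of $q_{n},\gamma_{n}$ treats the two pieces of $\mathbb{T}$ differently, and this is where the two hypotheses enter. On $F$ the function $1/f$ is bounded by $\|f^{-1}\mathbbm{1}_{F}\|_{\infty}$, so $f$ behaves like an invertible factor there and can be ``divided out'' by a bounded outer near-inverse $u_{n}$ (the outer function with modulus $\min(|f|^{-1},n)$ on $F$, small on $E$); this is the routine part and does not interact with the subtleties of $\mathcal{H}(b)$. The defect that remains lives essentially on $E$, and there one uses $a^{-1}\mathbbm{1}_{E}\in L^{2}$: the key point is that $\mathcal{H}(b)$ carries the norm $\|h\|_{\mathcal{H}(b)}^{2}=\|h\|_{2}^{2}+\|h^{+}\|_{2}^{2}$ (Sarason), so that $q_{n}f+a\gamma_{n}\to 1$ demands not merely $L^{2}(\mathbb{T})$-convergence but also convergence of the companions $(q_{n}f+a\gamma_{n})^{+}$. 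Letting $a\gamma_{n}$ absorb the defect on $E$, the finiteness of $\int_{E}|a|^{-2}\,dm$ is exactly the $H^{2}$-type bound that keeps $\|\gamma_{n}\|_{2}$ — hence the element $a\gamma_{n}\in aH^{2}\subseteq[f]$ together with its companion — under control; concretely one can take $\gamma_{n}$ comparable to the outer function of modulus $|a|^{-1}\mathbbm{1}_{E}+|f|^{-1}\mathbbm{1}_{F}$, which belongs to $H^{2}$ precisely because of the two hypotheses together, and then balance it against $q_{n}f$.

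The step I expect to be the real obstacle is exactly this balancing. One cannot glue an approximation on $E$ to one on $F$: everything in sight is analytic, and $H^{2}|_{E}$, although dense in $L^{2}(E)$, contains no element that is close in $L^{2}(\mathbb{T})$ to an indicator-type target, so the construction must be genuinely global. Thus the delicate matter is to arrange $q_{n}f+a\gamma_{n}\to 1$ in the \emph{full} $\mathcal{H}(b)$-norm — equivalently, to drive the companion error $(q_{n}f+a\gamma_{n}-1)^{+}$ to $0$ simultaneously with the $L^{2}(\mathbb{T})$-error — while the only quantitative inputs are the $L^{\infty}$-bound on $f^{-1}$ over $F$, the $L^{2}$-bound on $a^{-1}$ over $E$, the covering $E\cup F=\mathbb{T}$, and the outerness of $f$. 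Verifying that these inputs really do force the companion error to zero — perhaps most transparently via the dual formulation together with Sarason's description of $(aH^{2})^{\perp}$ — is the heart of the proof.
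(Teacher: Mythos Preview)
Your plan is not wrong in spirit, but it is incomplete and takes a much harder road than the paper. You correctly isolate the target $1\in[f]$ and correctly invoke $aH^{2}\subset[f]$ from Proposition~\ref{prop:ah2}. However, your proposed explicit construction of approximants $q_{n}f+a\gamma_{n}\to 1$ in the $\mathcal{H}(b)$-norm is never carried out; you yourself call the balancing of the companion error ``the heart of the proof'' and leave it unverified. As written this is a gap, not a proof.

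More to the point, the paper avoids this construction entirely. You quote the invariant-subspace description \eqref{eq:invariantsubspaces} only to extract $aH^{2}\subset[f]$, but its real strength is that it gives a \emph{membership test} for $[f]$: there is an extremal $\psi\in\mathcal{H}(b)$ with $J\psi=(\psi,\psi_{1})$ such that
\[
[f]=\Bigl\{g\in\mathcal{H}(b):\ \tfrac{g}{\psi}\in H^{2}\ \text{and}\ \tfrac{g}{\psi}\psi_{1}\in H^{2}\Bigr\}.
\]
So to get $1\in[f]$ one only needs $\psi^{-1}\in H^{2}$ and $\psi^{-1}\psi_{1}\in H^{2}$. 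Now the hypotheses do all the work on the boundary. From $aH^{2}\subset[f]$ one gets $ag/\psi\in H^{2}$ for every $g\in H^{2}$, hence $a/\psi\in H^{\infty}$; combined with $a^{-1}\mathbbm{1}_{E}\in L^{2}$ this gives $\psi^{-1}\mathbbm{1}_{E}\in L^{2}$. From $f\in[f]$ one gets $f/\psi\in H^{2}\subset L^{2}$; combined with $f^{-1}\mathbbm{1}_{F}\in L^{\infty}$ this gives $\psi^{-1}\mathbbm{1}_{F}\in L^{2}$. Since $E\cup F=\mathbb{T}$ we conclude $\psi^{-1}\in L^{2}$, and as $\psi^{-1}\in N^{+}$ (here $f$ outer forces $\psi$ outer), the Smirnov maximum principle gives $\psi^{-1}\in H^{2}$. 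The identical argument applied to $\psi^{-1}\psi_{1}$ yields $\psi^{-1}\psi_{1}\in H^{2}$. Hence $1\in[f]$.

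The moral: rather than building approximants and wrestling with the companion norm, test $g=1$ directly against the structural description of $[f]$. The two hypotheses are tailored precisely to push $\psi^{-1}$ into $L^{2}$ on the two pieces of $\mathbb{T}$, and Smirnov then upgrades $L^{2}$ to $H^{2}$ for free.
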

More precise results require a detailed analysis of the space $(aH^{2})^{\perp}$. The case $\dim(aH^{2})^{\perp} = \infty$ has appeared indirectly in a conjecture of Sarason on the exposed points of the unit ball of $H^{1}$ (see Chapter X of \cite{MR1289670}) and in the negative answer to that conjecture in \cites{MR1889082, MR1734331, MR1258029}. As we shall see there is an intimate connection between the cyclic vectors in $\mathcal{H}(b)$ and the exposed points of the unit ball of $H^{1}$.

To motivate what is to come we briefly describe exposed points of the unit ball of $H^{1}$. For a convex set $K$ in a linear space $X$ a point $x \in K$ is called an exposed point of $K$ if there exists a real linear functional $\ell$, such that $\ell(x) > \ell(k)$, for all $k \in K \setminus \left\{ x \right\}$. Exposed points of the unit ball of $H^{1}$ will be called exposed points if no confusion can arise. Exposed points are extreme points and so if $f \in H^{1}$ is an exposed point of the unit ball it is an outer function \cite{MR98981}. In $H^{1}$ there is an alternative characterization in terms of the argument of the boundary function. An outer function $f \in H^{1}$ of unit norm is an exposed point if and only if the only functions in $H^{1}$ with the same argument a.e. on $\mathbb{T}$ are positive multiples of $f$, here the argument function is the principal branch $\arg (z) \in [0,2\pi)$. Also, an outer function of $f \in H^{1}$ of unit norm is exposed if and only if the Toeplitz operator $T_{\overline{f}/f}$ has trivial kernel, see \cite{MR1289670}. There is an extensive literature on exposed points in $H^{1}$, see for example \cites{MR1207406, MR1038352, MR1223178, MR1734331, MR1258029}. Despite this, there is no characterization of exposed points based on the modulus of the function on the unit circle.

Exposed points are connected to cyclicity in $\mathcal{H}(b)$ in the following way: the function $\phi = a/(1-b)$ is an outer function and, after suitable normalization of $b$, it is of unit norm. For such functions we shall consider a set $\sigma(\phi) \subset \mathbb{T}$ measuring in some sense how far away $\phi^{2}$ is from being an exposed point (For the rigorous definition see Definition \ref{def:specf} in Section \ref{sec:spec}). A heuristic principle is that an outer function $f \in \mathcal{H}(b)$ is cyclic if it is ``not too small'' on $\sigma(\phi)$. In Section \ref{sec:suff} we prove the following concrete manifestation of this principle.
\begin{theorem}\label{thm:thmB}
  Let $\mathcal{H}(b)$ be a non-extreme de Branges-Rovnyak space, $b(0)=0$, and set $\phi = a/(1-b)$. Without loss of generality we may assume $\phi$ is of unit norm. Let, in addition, $f \in \mathcal{H}(b)$ be an outer function. Suppose that for each point $\zeta \in \sigma(\phi)$ there exists an open arc $\zeta \in I_{\zeta} \subset \mathbb{T}$ and number $\eta_{\zeta} > 0$ with $\lvert f \rvert > \eta_{\zeta}$ a.e. on $I_{\zeta}$. Then $f$ is cyclic.
\end{theorem}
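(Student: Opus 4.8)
The plan is to reduce Theorem~\ref{thm:thmB} to the general sufficient condition of Theorem~\ref{thm:thmA}. Since the precise definition of $\sigma(\phi)$ is only given later (Definition~\ref{def:specf} in Section~\ref{sec:spec}), any proof plan must begin by saying what that definition should provide. Guided by the heuristic ``$f$ is cyclic if it is not too small on $\sigma(\phi)$'', I will assume $\sigma(\phi)$ is set up so that it is a closed (hence compact) subset of $\mathbb{T}$ and, for every compact $K\subset\mathbb{T}\setminus\sigma(\phi)$, one has $a^{-1}\mathbbm{1}_{K}\in L^{2}$. The latter is the natural quantitative reading of ``$\sigma(\phi)$ measures how far $\phi^{2}=\bigl(a/(1-b)\bigr)^{2}$ is from being exposed'': on arcs disjoint from $\sigma(\phi)$ the pair $(a,b)$ should locally look like the finite-codimension situation of the Remark, in which $1-|b|^{2}$ does not degenerate and so $1/a$ is locally square-integrable. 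Granting this, Theorem~\ref{thm:thmB} follows from Theorem~\ref{thm:thmA} by a one-line covering argument.

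Here is the argument. For each $\zeta\in\sigma(\phi)$ fix the open arc $I_{\zeta}\ni\zeta$ and the constant $\eta_{\zeta}>0$ from the hypothesis, so $|f|>\eta_{\zeta}$ a.e.\ on $I_{\zeta}$. By compactness of $\sigma(\phi)$, finitely many of these arcs, $I_{\zeta_{1}},\dots,I_{\zeta_{n}}$, cover $\sigma(\phi)$; set $F=\bigcup_{k=1}^{n}I_{\zeta_{k}}$ and $E=\mathbb{T}\setminus F$. Then $E\cup F=\mathbb{T}$ and, on $F$, $|f|\ge\min_{1\le k\le n}\eta_{\zeta_{k}}>0$ a.e., so $f^{-1}\mathbbm{1}_{F}\in L^{\infty}$. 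Moreover $E$ is closed and disjoint from $\sigma(\phi)$ --- it is here that passing to a \emph{finite} subcover is essential, since otherwise $E$ could accumulate at $\sigma(\phi)$ --- hence $E$ is a compact subset of $\mathbb{T}\setminus\sigma(\phi)$ and $a^{-1}\mathbbm{1}_{E}\in L^{2}$. Since $f\in\mathcal{H}(b)$ is outer, Theorem~\ref{thm:thmA} applies with these $E$ and $F$ and gives that $f$ is cyclic. (The normalizations $b(0)=0$ and $\|\phi\|=1$ enter only to make $\sigma(\phi)$ well-defined as in Section~\ref{sec:spec} and are used nowhere else.)

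So the proof of Theorem~\ref{thm:thmB} is itself short, with the real content pushed into Section~\ref{sec:spec}: the step I expect to be the main obstacle is not the covering argument above but the construction of $\sigma(\phi)$ --- choosing it small enough to be useful while still controlling $a^{-1}$ on its complement --- which is exactly where the detailed analysis of $(aH^{2})^{\perp}$, the Sarason-type decomposition recalled in the Remark now run locally on subarcs of $\mathbb{T}$, has to be carried out. As a sanity check, this mechanism should recover the sufficiency half of Theorem~\ref{thm:finite_defect} when $\dim(aH^{2})^{\perp}<\infty$: there $\sigma(\phi)=\{\lambda_{1},\dots,\lambda_{s}\}$ is finite, the hypothesis that $|f|$ is bounded below near each $\lambda_{j}$ is strictly stronger (for outer $f$) than $f(\lambda_{j})\neq 0$, and the covering reduces to separating finitely many points.
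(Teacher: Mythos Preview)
Your reduction to Theorem~\ref{thm:thmA} rests on the hypothesis that, for every compact $K\subset\mathbb{T}\setminus\sigma(\phi)$, one has $a^{-1}\mathbbm{1}_{K}\in L^{2}$. That is not how $\sigma(\phi)$ is defined, and it is false in general. In the paper, $\sigma(\phi)$ is the set of points across which some function in $J_{\phi}=\phi^{-1}\ker T_{\overline{\phi}/\phi}$ fails to continue analytically. Proposition~\ref{prop:spec_phi}(i) says that local square-integrability of $\phi^{-1}$ on an arc forces that arc to be disjoint from $\sigma(\phi)$; you are implicitly using the converse, which the paper neither states nor proves. A concrete counterexample: take $\phi=c(1-z)^{1/2}(1+z)$ with $c>0$ a normalizing constant. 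Here $\phi=Fp$ with $F^{2}=c^{2}(1-z)$ exposed and $p(z)=1+z$, so by Sarason's description $\dim(aH^{2})^{\perp}=1$ and one checks $J_{\phi}=\mathrm{span}\{k_{-1}\}$, whence $\sigma(\phi)=\{-1\}$. But $\lvert\phi\rvert^{-2}\sim\lvert 1-z\rvert^{-1}$ near $z=1$, so $\phi^{-1}$ (and therefore $a^{-1}$, since $\lvert a\rvert=\lvert 1-b\rvert\,\lvert\phi\rvert\le 2\lvert\phi\rvert$) is not square-integrable on any arc containing $1$, even though $1\notin\sigma(\phi)$. So your sets $E,F$ need not satisfy hypothesis~(ii) of Theorem~\ref{thm:thmA}, and the argument breaks down.

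The paper's proof does not go through Theorem~\ref{thm:thmA} at all. Instead it works directly with an annihilator: if $Vh\perp z^{n}f$ for all $n\ge 0$ with $h\in J_{\phi}$, a Cauchy-transform identity (Lemma~\ref{lemma:cauchymult}) yields $\overline{h(1/\overline{\lambda})}f(\lambda)=(1-b(\lambda))\,C_{|\phi|^{2}m}(\overline{h}g)(\lambda)$ for $g=V^{-1}f$, and Kolmogorov's theorem gives $\overline{h}f\in L^{1,\infty}_{0}$. Now compactness of $\sigma(\phi)$ and your hypothesis on $f$ are used, but only to conclude that $h$ itself is bounded (in fact analytic) off a neighborhood $I$ of $\sigma(\phi)$ while $f$ is bounded below on $I$; together these force $\overline{h}\in L^{1,\infty}_{0}$, hence $h,\overline{h}\in H^{1,\infty}_{0}$, and an Aleksandrov-type rigidity lemma (Lemma~\ref{lemma:weakmorera}) gives $h\equiv 0$. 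The point is that the information one has off $\sigma(\phi)$ concerns the \emph{annihilator} $h$, not the function $a$; your plan tries to transfer it to $a$, and that transfer is exactly the step that fails.
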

We shall also prove a sharper version of the above theorem which depends only on the behavior of $f$ at each point in $\sigma(\phi)$ (and not in a neighborhood). Before stating the result we will comment on our method. For an outer function $\phi$ consider the kernel of the Toeplitz operator $T_{\overline{\phi}/\phi}$ and let $J_{\phi} = \phi^{-1} \ker (T_{\overline{\phi}/\phi})$. Functions in $J_{\phi}$ possess the remarkable property of analytic pseudocontinuation, that is: $J_{\phi}$ consists of functions $f$ analytic in $\mathbb{C}_{\infty} \setminus \mathbb{T}$, such that their non-tangential limits from inside and outside the unit disk coincide a.e. We shall realize the space $(aH^{2})^{\perp}$ as a space of normalized Cauchy transforms of functions in $J_{\phi}$. Thus the difference in the case $\dim (aH^{2})^{\perp} < \infty$ and the infinite case reflects the difference between describing finite and infinite dimensional Toeplitz kernels. We proceed to show that the cyclicity problem in $\mathcal{H}(b)$ is intimately related to the question of analytic continuation in $J_{\phi}$. For the next theorem denote by $V$ the operator
\begin{equation*}
  Vh(z) = (1-b(z)) \int_{\mathbb{T}} \frac{h(\zeta)\lvert\phi(\zeta)\rvert^{2}dm(\zeta)}{1-z\overline{\zeta}} \text{, } h \in L^{2}(\lvert \phi \rvert^{2}dm).
\end{equation*}
The following sharpening of Theorem \ref{thm:thmB} is proved in Section \ref{sec:suff}.
\begin{theorem}\label{thm:thmC}
    Let $\mathcal{H}(b)$ be a non-extreme de Branges-Rovnyak space, $b(0) = 0$, and set $\phi = a/(1-b)$. Without loss of generality we may assume $\phi$ is of unit norm. Let, in addition, $g \in H^{\infty}$ and write $f = Vg$. Denote by $\theta$ the inner factor of $f$ and set $F = f/\theta$. If $\sigma(F) \cap \sigma(\phi) = \varnothing$, then $F$ is cyclic.
\end{theorem}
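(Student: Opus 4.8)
The plan is to show that $F$ is cyclic by proving that the only vector of $\mathcal{H}(b)$ orthogonal to $[F]$ is $0$. A few reductions come first. Since $g\in H^{\infty}\subseteq L^{2}(|\phi|^{2}\,dm)$, the function $f=Vg$ lies in $\mathcal{H}(b)$, and from the description of $V$ in Section \ref{sec:spec} its outer part $F=f/\theta$ again belongs to $\mathcal{H}(b)$. Since $b(0)=0$, the constant function $1$ is the reproducing kernel of $\mathcal{H}(b)$ at the origin, hence lies in $\mathcal{H}(b)$; as polynomials are dense in the non-extreme space $\mathcal{H}(b)$, the function $1$ is cyclic, so it suffices to place $1$ in $[F]$. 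I will also use the fact that $aH^{2}\subseteq[F]$ for every outer $F\in\mathcal{H}(b)$ --- which itself can be obtained by an approximation argument in the spirit of Theorem \ref{thm:thmA}; granting this, $[F]^{\perp}\subseteq(aH^{2})^{\perp}$, the subspace $[F]^{\perp}$ is invariant under the backward shift $M_{z}^{*}$, and everything reduces to showing $[F]^{\perp}=\{0\}$.

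Next I would move into the Cauchy transform model of $(aH^{2})^{\perp}$ from Section \ref{sec:spec}: every $u\in[F]^{\perp}$ has the form $u=N\psi$ with $\psi\in J_{\phi}=\phi^{-1}\ker T_{\overline{\phi}/\phi}$. Such a $\psi$ is analytic in $\mathbb{D}$ (because $\phi$ is outer) and possesses a pseudocontinuation $\widetilde\psi$, analytic on $\{|z|>1\}\cup\{\infty\}$ with $\widetilde\psi(\infty)=0$, and $\psi$ in fact continues analytically across each point of $\mathbb{T}\setminus\sigma(\phi)$, where it agrees with $\widetilde\psi$. The core of the proof is to extract a second continuation from the orthogonality relations $\langle u,\,z^{n}F\rangle_{\mathcal{H}(b)}=0$, $n\ge 0$. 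Substituting $F=Vg/\theta$ into the inner product formula for $(aH^{2})^{\perp}$ in this model, I expect these relations to unwind to the statement that $\theta\psi$ belongs to a Toeplitz kernel twisted by $\overline{F}/F$; consequently $\theta\psi$, and hence --- after dividing out the inner function $\theta$, whose support requires a separate but routine treatment --- also $\psi$, continues analytically across each point of $\mathbb{T}\setminus\sigma(F)$.

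With both continuations available, the hypothesis $\sigma(F)\cap\sigma(\phi)=\varnothing$ finishes the argument at once: $\psi$ then continues analytically across the whole of $\mathbb{T}$, so patching it with its pseudocontinuation $\widetilde\psi$ yields a function analytic on all of the Riemann sphere $\mathbb{C}_{\infty}$ and vanishing at $\infty$, hence identically $0$. Therefore $\psi\equiv 0$, $u=N\psi=0$, and $[F]^{\perp}=\{0\}$, so $F$ is cyclic. (If the support of $\theta$ leaves finitely many boundary points uncovered, one instead concludes that $\psi$ is a rational function with poles on $\mathbb{T}$, so that $u$ lies in the finite-dimensional part of $(aH^{2})^{\perp}$ studied in Theorem \ref{thm:finite_defect}, where orthogonality to $[F]$ forces combinations of point evaluations of $F$ at points of $\sigma(\phi)$ to vanish --- impossible since $\sigma(F)\cap\sigma(\phi)=\varnothing$.)

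The step I expect to be the main obstacle is the middle one: deriving the $\overline{F}/F$-twisted pseudocontinuation of $\theta\psi$ from the bare orthogonality relations, and --- closely linked to this --- checking that ``$\zeta\notin\sigma(F)$'' genuinely supplies the analytic continuation of $\psi$ near $\zeta$ that the argument needs. This rests on the precise description of $\sigma(F)$ from Definition \ref{def:specf} and on a manageable formula for the $\mathcal{H}(b)$ pairing in the Cauchy transform model; with those in hand, what remains is an analytic-continuation argument together with the appeal to the finite-codimensional case already established in Theorem \ref{thm:finite_defect}.
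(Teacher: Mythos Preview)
Your overall strategy is the paper's: reduce to $(aH^2)^\perp$ via Proposition~\ref{prop:ah2}, pass to the $J_\phi$ model of Theorem~\ref{thm:model}, show that any nonzero $\psi\in J_\phi$ with $V\psi\perp[F]$ must also lie in $J_F$, and then invoke $\sigma(F)\cap\sigma(\phi)=\varnothing$ to conclude that $\psi$ continues analytically across all of $\mathbb T$ and hence vanishes identically.

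Where you diverge is in the execution of the middle step, and there your plan is more convoluted than necessary. You propose to unwind $\langle z^nF,V\psi\rangle_b=0$ directly, expect to land on a statement about $\theta\psi$ in a twisted Toeplitz kernel, and then worry about the boundary support of $\theta$ and a possible fallback to Theorem~\ref{thm:finite_defect}. The paper bypasses all of this with one observation: $f=\theta F$ already lies in $[F]$ (immediate from the description~\eqref{eq:invariantsubspaces} of invariant subspaces, since multiplication by an inner function preserves the defining $H^2$ conditions). Hence $V\psi\perp[F]$ already gives $\langle z^nf,V\psi\rangle_b=0$ for all $n$, equivalently $\langle g,L^n\psi\rangle_{|\phi|^2dm}=0$, and now the hypothesis $g\in H^\infty$ --- which applies to $V^{-1}f$, not to $V^{-1}F$ --- does the real work. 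Since $\psi\in J_\phi$ gives $\overline\psi\phi\in H^2_0$, one has $\overline\psi g\in H^2/\phi$, so Lemma~\ref{lemma:cauchymult} yields
\[
\overline{\psi(1/\overline\lambda)}\,f(\lambda)=V(\overline\psi g)(\lambda)\in\mathcal H(b)\subset H^2.
\]
Dividing by $\theta$ is then trivial, because $\lambda\mapsto\overline{\psi(1/\overline\lambda)}$ is itself analytic in $\mathbb D$: one obtains $\overline\psi F\in N^+\cap L^2=H^2$ (indeed $H^2_0$, since $\psi(\infty)=0$), which together with $\psi F\in H^2$ says precisely $\psi\in J_F$. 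No detour through $\theta\psi$, no analysis of the support of $\theta$, and no appeal to the finite-codimension case is needed.
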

Note that it is a part of the conclusion of Theorem \ref{thm:thmC} that $F = f/\theta \in \mathcal{H}(b)$. Theorem \ref{thm:thmC} is more precise than Theorem \ref{thm:thmB} since it requires information only at each point of $\sigma(\phi)$ and not in a neighborhood of every point, however, it applies to fewer functions since it requires $f = Vg$, with $g \in H^{\infty}$ instead of merely $g \in H^{2}/\phi$.

We end with a section on examples and applications. In particular, we use our results to give necessary and sufficient conditions for the cyclicity of $b$ and the kernel elements $k_{\lambda}^{b}$. We also describe the cyclic vectors in certain generalized Dirichlet spaces. Finally, we consider the case $b(z) = (1+\theta)/2$, where $\theta$ is a non-constant inner function. These give examples for which $\text{dim}(aH^{2})^{\perp} = \infty$, but we can still give a necessary and sufficient function theoretic condition for cyclicity.

\subsection*{Acknowledgements}
    The author expresses his deep gratitude to Alexandru Aleman for having shared the problems investigated here and for several helpful discussions.

\section{Preliminaries}

\subsection{The space $\mathcal{H}(b)$}

To each bounded analytic function $b : \mathbb{D} \to \mathbb{D}$ we associate the Hilbert space of analytic functions, $\mathcal{H}(b)$, defined as the range $(I-T_{b}T_{b}^{*})^{1/2}H^{2}$ with induced inner product
\begin{equation*}
  \langle (I-T_{b}T_{b}^{*})^{1/2}f, (I-T_{b}T_{b}^{*})^{1/2}g \rangle_{\mathcal{H}(b)} = \langle f, g \rangle_{2},
\end{equation*}
for $f,g \perp \ker((I-T_{b}T_{b}^{*})^{1/2})$. Equivalently $\mathcal{H}(b)$ can be seen as the reproducing kernel Hilbert space associated with the reproducing kernel $k_{\lambda}^{b} = (1-\overline{b(\lambda)}b(z))/(1-\overline{\lambda}z)$, $\lambda \in \mathbb{D}$. As we shall ultimately be interested in cyclic vectors of the forward shift, $M_{z}f = zf$, we shall confine our attention to those $b$ satisfying $z\mathcal{H}(b) \subset \mathcal{H}(b)$. It can be shown (see \cite{MR1289670}) that this happens if and only if $b$ is a non-extreme point of the unit ball of $H^{\infty}$, by \cite{MR98981}, this happens if and only if $b$ satisfies
\begin{equation*}
  \int_{\mathbb{T}} \log (1 - \lvert b \rvert) dm > - \infty.
\end{equation*}
We shall also, in an attempt to simplify formulas, assume $b(0) = 0$. Since $b$ is non-extreme we can introduce the unique outer function $a$ satisfying $\lvert a \rvert^{2} + \lvert b \rvert^{2} = 1$, a.e. on $\mathbb{T}$ and $a(0) > 0$. Now for $\alpha \in \mathbb{T}$ the function $(1+ \overline{\alpha}b)/(1-\overline{\alpha}b)$ has nonnegative real part in the unit disk and hence can be represented via the Herglotz integral formula
\begin{equation}\label{eq:herglotzformula}
  \frac{1+\overline{\alpha}b}{1-\overline{\alpha}b} = \int_{\mathbb{T}} \frac{\zeta + z}{\zeta - z} d\mu_{\alpha}(\zeta),
\end{equation}
for some Borel probability measure $\mu_{\alpha}$. The measures $\mu_{\alpha}$ are the Aleksandrov-Clark measures of $b$. Taking real parts and using properties of the Poisson kernel we see that $(1 - \lvert b \rvert^{2})/\lvert \alpha - b \rvert^{2}$ is the Radon-Nikodym derivative of the absolutely continuous part of $\mu_{\alpha}$ with respect to normalized Lebesgue measure. It follows that the measure $\mu_{\alpha}$ has the following decomposition in terms of its absolutely continuous and singular parts
\begin{equation*}
    d\mu_{\alpha} = \lvert \phi_{\alpha} \rvert^{2}dm + d(\mu_{\alpha})_{s},
\end{equation*}
where $\phi_{\alpha} = a/(1-\overline{\alpha}b) \in H^{2}$ is an outer function. Let $P^{2}(\mu_{\alpha})$ denote the closure of the analytic polynomials, $\text{Span} (\left\{ z^{n} : n \geq 0 \right\})$, in $L^{2}(\mu_{\alpha})$. Then $P^{2}(\mu_{\alpha})$ decomposes as
\begin{equation}\label{eq:splitting}
    P^{2}(\mu_{\alpha}) = \frac{H^{2}}{\phi_{\alpha}} \oplus L^{2}((\mu_{\alpha})_{s}),
\end{equation}
where $H^{2}/\phi_{\alpha} = \left\{ f/\phi_{\alpha} : f \in H^{2} \right\}$. Since $\lvert b(\zeta) \rvert < 1$ for almost every $\zeta \in \mathbb{T}$ an application of Fubini's Theorem
\begin{equation*}
  \begin{split}
    \int_{\mathbb{T}} (\mu_{\alpha})_{a}(\mathbb{T})dm(\alpha) = \int_{\mathbb{T}} \int_{\mathbb{T}} \frac{1-\lvert b(\zeta) \rvert^{2}}{\lvert \alpha - b(\zeta) \rvert}dm(\zeta)dm(\alpha) \\ = \int_{\mathbb{T}} \int_{\mathbb{T}} \frac{1-\lvert b(\zeta) \rvert^{2}}{\lvert \alpha - b(\zeta) \rvert}dm(\alpha)dm(\zeta) = 1,
  \end{split}
\end{equation*}
shows that for non-extreme $b$ the measures $\mu_{\alpha}$ are absolutely continuous for a.e. $\alpha \in \mathbb{T}$. By replacing $b$ by $\bar{\alpha}b$ if necessary we may assume $\mu = \mu_{1}$ is absolutely continuous of the form $d\mu = \lvert \phi \rvert^{2}dm$, with $\phi = a/(1-b)$. In particular we are free to assume that equation \eqref{eq:splitting} reduces to $P^{2}(\mu_{1}) = H^{2}/\phi$. For a Borel measure $\nu$ on $\mathbb{T}$ we introduce the Cauchy transform
\begin{equation*}
    C_{\nu}(z) = \int_{\mathbb{T}} \frac{d\nu(\zeta)}{1-z\overline{\zeta}},
\end{equation*}
and the operator
\begin{equation*}
  C_{\nu}h(z) = \int_{\mathbb{T}} \frac{h(\zeta)d\nu(\zeta)}{1-z\overline{\zeta}}, h \in L^{2}(\nu).
\end{equation*}
Associated to the finite Borel measure, $d\mu_{\alpha} = \lvert \phi_{\alpha} \rvert^{2} dm + d(\mu_{\alpha})_{s}$, we consider the normalized Cauchy transform
\begin{equation}\label{eq:unitarymap}
  V_{\alpha}h(z) = \frac{C_{\mu_{\alpha}}h(z)}{C_{\mu_{\alpha}}(z)} = (1-\overline{\alpha}b(z)) \int_{\mathbb{T}} \frac{hd\mu_{\alpha}(\zeta)}{1-z\overline{\zeta}},  h \in L^{2}(\mu_{\alpha}).
\end{equation}
The next result is standard and can be found in \cite{MR1289670}. We sketch a proof for the convenience of the reader.

\begin{proposition}\label{prop:unitarymap}
  Let $\mathcal{H}(b)$ be a de Branges-Rovnyak space and $\mu_{\alpha}$ an Aleksandrov-Clark measure of $b$. Then the map $V_{\alpha}$ from \eqref{eq:unitarymap} is a unitary operator $V_{\alpha} : P^{2}(\mu_{\alpha}) \to \mathcal{H}(b)$. Conversely, suppose $\phi \in H^{2}$ is an outer function of unit norm. Then there exists a unique non-extreme $b : \mathbb{D} \to \mathbb{D}$ satisfying $b(0) = 0$ and $V_{1}(H^{2}/\phi) = \mathcal{H}(b)$.
  \begin{proof}
      Let $k_{\lambda} = 1/(1-\overline{\lambda}z)$ be the Cauchy kernel. A computation based on equation \eqref{eq:herglotzformula} gives
      \begin{equation*}
        \begin{split}
          (1-\alpha \overline{b(\lambda)})V_{\alpha}k_{\lambda} = (1-\alpha \overline{b(\lambda)})(1-\overline{\alpha} b(z)) \int_{\mathbb{T}} \frac{1}{1-\overline{\lambda}\zeta}\frac{1}{1-z \overline{\zeta}}d\mu_{\alpha}(\zeta) \\ = \frac{(1-\alpha \overline{b(\lambda)})(1-\overline{\alpha} b(z))}{2(1-\overline{\lambda}z)} \int_{\mathbb{T}} \overline{\left( \frac{\zeta + \lambda}{\zeta -\lambda} \right)} + \frac{\zeta + z}{\zeta - z}d\mu_{\alpha}(\zeta) \\ = \frac{(1-\alpha \overline{b(\lambda)})(1-\overline{\alpha} b(z))}{2(1-\overline{\lambda}z)} \left( \frac{1+\alpha \overline{b(\lambda)}}{1-\alpha \overline{b(\lambda)}} + \frac{1+\overline{\alpha} b(z)}{1+\overline{\alpha} b(z)}\right) = k_{\lambda}^{b}(z).
        \end{split}
      \end{equation*}
      Also,
      \begin{equation*}
          \| k_{\lambda}^{b} \|_{b}^{2} = k_{\lambda}^{b}(\lambda) = \frac{1-\lvert b(\lambda) \rvert^{2}}{1-\lvert \lambda \rvert^{2}},
      \end{equation*}
      and by equation \eqref{eq:herglotzformula}
      \begin{equation*}
          \| (1-\alpha \overline{b(\lambda)})k_{\lambda} \|_{L^{2}(\mu_{\alpha})}^{2} = \frac{\lvert 1-\alpha \overline{b(\lambda)} \rvert^{2}}{1-\lvert \lambda \rvert^{2}}\frac{1-\lvert b(\lambda) \rvert^{2}}{\lvert 1 - \overline{\alpha}b(\lambda) \rvert^{2}} = \frac{1-\lvert b(\lambda) \rvert^{2}}{1-\lvert \lambda \rvert^{2}}.
      \end{equation*}
      Since $\left\{ (1-\alpha \overline{b(\lambda)})k_{\lambda} \right\}_{\lambda \in \mathbb{D}}$ is complete (by complete we mean it has dense linear span) in $P^{2}(\mu_{\alpha})$ and $\left\{ k_{\lambda}^{b} \right\}_{\lambda \in \mathbb{D}}$ is complete in $\mathcal{H}(b)$ the above computations and a simple limiting argument show that $V_{\alpha}$ is unitary. For the converse, it is sufficient to define $b$ via the identity
      \begin{equation*}
          \frac{1+b(z)}{1-b(z)} = \int_{\mathbb{T}} \frac{\zeta + z}{\zeta -z}\lvert \phi \rvert^{2}dm(\zeta).
      \end{equation*}
      The desired properties follow easily.
  \end{proof}
\end{proposition}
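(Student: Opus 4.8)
The plan is to show that $V_\alpha$ carries a complete system of elementary vectors of $P^{2}(\mu_\alpha)$ isometrically onto the reproducing kernels of $\mathcal{H}(b)$, and then to obtain the converse by inverting the Herglotz construction.

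First I would evaluate $V_\alpha$ on the Cauchy kernels $k_\lambda(\zeta)=1/(1-\overline{\lambda}\zeta)$, $\lambda\in\mathbb{D}$. On $\mathbb{T}$ one has $1/(1-\overline{\lambda}\zeta)=\tfrac12\bigl(\overline{(\zeta+\lambda)/(\zeta-\lambda)}+1\bigr)$ and $1/(1-z\overline{\zeta})=\tfrac12\bigl((\zeta+z)/(\zeta-z)+1\bigr)$, so after replacing $\overline{\zeta}$ by $1/\zeta$ and performing a partial fraction decomposition in $\zeta$, the integral $\int_{\mathbb{T}}(1-\overline{\lambda}\zeta)^{-1}(1-z\overline{\zeta})^{-1}\,d\mu_\alpha(\zeta)$ becomes a linear combination of the Herglotz integral \eqref{eq:herglotzformula} and its conjugate, evaluated at $z$ and at $\lambda$, together with the total mass $\mu_\alpha(\mathbb{T})=1$. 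Substituting \eqref{eq:herglotzformula} and simplifying using $\lvert\alpha\rvert=1$ should collapse this to the clean identity $(1-\alpha\overline{b(\lambda)})\,V_\alpha k_\lambda=k_\lambda^{b}$. Pairing two such images and using the reproducing property of $k_\lambda^{b}$ in $\mathcal{H}(b)$ together with $\langle f,k_\lambda\rangle_{L^{2}(\mu_\alpha)}=(1-\overline{\alpha}b(\lambda))^{-1}V_\alpha f(\lambda)$ then gives $\langle V_\alpha k_\lambda,V_\alpha k_\eta\rangle_{\mathcal{H}(b)}=\langle k_\lambda,k_\eta\rangle_{L^{2}(\mu_\alpha)}$, so $V_\alpha$ is inner-product preserving on $\operatorname{span}\{k_\lambda:\lambda\in\mathbb{D}\}$. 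Since $\lvert b(\lambda)\rvert<1$ on $\mathbb{D}$, the scalar $1-\alpha\overline{b(\lambda)}$ never vanishes, hence $\operatorname{span}\{(1-\alpha\overline{b(\lambda)})k_\lambda\}=\operatorname{span}\{k_\lambda\}$; taking $\overline{\lambda}$-derivatives of $k_\lambda$ at $\lambda=0$ (the difference quotients converge uniformly on $\mathbb{T}$, hence in $L^{2}(\mu_\alpha)$ as $\mu_\alpha$ is finite) shows every monomial $z^{n}$ lies in the closed span, so $\{k_\lambda\}$ is complete in $P^{2}(\mu_\alpha)$. On the other side $\{k_\lambda^{b}\}$ is complete in $\mathcal{H}(b)$ by the reproducing property. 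An inner-product-preserving map with dense domain and dense range extends to a unitary, and as \eqref{eq:unitarymap} already defines the operator on the dense set, the extension is given by the same formula; this proves the first assertion.

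For the converse, given an outer $\phi\in H^{2}$ with $\lVert\phi\rVert_{2}=1$, put $d\mu=\lvert\phi\rvert^{2}\,dm$ (a probability measure) and define $b$ by $(1+b)/(1-b)=g$, where $g(z)=\int_{\mathbb{T}}\frac{\zeta+z}{\zeta-z}\lvert\phi\rvert^{2}\,dm(\zeta)$. Since $g$ has nonnegative real part and $g(0)=\lVert\phi\rVert_{2}^{2}=1$, the function $b=(g-1)/(g+1)$ maps $\mathbb{D}$ into $\mathbb{D}$ with $b(0)=0$; passing to boundary values of real parts gives $(1-\lvert b\rvert^{2})/\lvert1-b\rvert^{2}=\lvert\phi\rvert^{2}$ a.e., i.e. $\lvert a/(1-b)\rvert=\lvert\phi\rvert$ on $\mathbb{T}$. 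As $1-b$ has positive real part it is outer, so $a/(1-b)$ and $\phi$ are outer functions with the same boundary modulus and therefore coincide up to a unimodular constant, which does not change the set $H^{2}/\phi$ (and can be removed by normalizing $\phi(0)>0$). By uniqueness of the Herglotz representation, the Aleksandrov--Clark measure $\mu_{1}$ of this $b$ is exactly $\lvert\phi\rvert^{2}\,dm$, hence purely absolutely continuous, so \eqref{eq:splitting} reduces to $P^{2}(\mu_{1})=H^{2}/\phi$ and the first part yields $V_{1}(H^{2}/\phi)=\mathcal{H}(b)$; uniqueness of $b$ is forced by the uniqueness of the Herglotz data.

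I expect the only genuinely non-formal ingredient to be the equality $P^{2}(\mu_{1})=H^{2}/\phi$ in the absolutely continuous case---a Szeg\H{o}-type density of polynomials in $H^{2}/\phi$---which I would simply quote from \cite{MR1289670} (it is \eqref{eq:splitting} in the excerpt); everything else is either the bookkeeping of the partial-fraction identity in the first step or routine functional analysis.
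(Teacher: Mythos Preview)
Your proposal is correct and follows essentially the same route as the paper: compute $V_\alpha$ on Cauchy kernels via the partial-fraction identity and the Herglotz formula to obtain $(1-\alpha\overline{b(\lambda)})V_\alpha k_\lambda=k_\lambda^{b}$, verify isometry on these kernels (the paper checks norms, you check inner products---equivalent by polarization), invoke completeness on both sides, and for the converse define $b$ through the Herglotz integral of $\lvert\phi\rvert^{2}\,dm$. Your write-up is in fact somewhat more detailed than the paper's (the completeness of $\{k_\lambda\}$ in $P^{2}(\mu_\alpha)$ and the identification $a/(1-b)=\phi$ are spelled out rather than left implicit), but the architecture is identical.
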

\begin{remark}
    We remark that the first part of of Proposition \ref{prop:unitarymap} does not require $b$ to be non-extreme, nor does it require $b(0) = 0$. In particular if $\theta$ is an inner function with Aleksandrov-Clark measure $\sigma$ at $\alpha \in \mathbb{T}$, we have $V_{\alpha}L^{2}(\sigma) = K_{\theta}$, where $K_{\theta} = H^{2} \cap \theta \overline{H^{2}_{0}}$ is the usual model space (here $\overline{H^{2}_{0}}$ is the orthogonal complement of $H^{2}$ in $L^{2}$). We shall need this in Section \ref{sec:ex}.
\end{remark}
Thus there is a one-to-one correspondence between outer functions of unit norm in $H^{2}$ and non-extreme $\mathcal{H}(b)$ spaces (with $b(0)=0$ and $\mu_{1}$ absolutely continuous). Thus in the sequel, we may speak of the $\mathcal{H}(b)$ space generated by an outer function $\phi \in H^{2}$ of unit norm.

We end this section by recalling a Theorem of Poltoratski that we will need in the continuation, see Theorem 2.7. in \cite{MR1223178}.

\begin{theorem}\label{thm:poltoratski}
    Let $\sigma$ be a finite positive Borel measure on $\mathbb{T}$ and denote its singular part by $(\sigma)_{s}$. Let $V$ be the operator
    \begin{equation*}
        Vh(z) = C_{\sigma}(z)^{-1}\int_{\mathbb{T}} \frac{h(\zeta)d\sigma(\zeta)}{1-z\overline{\zeta}} \text{, } h \in L^{2}(\sigma).
    \end{equation*}
    Suppose $h \in L^{2}(\sigma)$, then $Vh$ converges non-tangentially to $h$ $(\sigma)_{s}$-a.e.
\end{theorem}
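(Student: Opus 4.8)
The plan is to combine a pointwise reduction with the standard density-plus-maximal-function scheme for almost everywhere convergence. To begin, I would record that $\operatorname{Re} C_{\sigma}(z) = \tfrac12\bigl(P[\sigma](z) + \sigma(\mathbb{T})\bigr)$, where $P[\sigma]$ denotes the Poisson integral; consequently, at every point $\zeta_{0}$ where the symmetric derivative of $\sigma$ with respect to $m$ equals $+\infty$ --- in particular at $(\sigma)_{s}$-a.e.\ $\zeta_{0}$ --- one has $P[\sigma](z) \to +\infty$ and hence $\lvert C_{\sigma}(z)\rvert \to \infty$ non-tangentially. Fixing $h \in L^{2}(\sigma)$, the Lebesgue differentiation theorem for the measure $\sigma$ gives, at $\sigma$-a.e.\ $\zeta_{0}$, that $\frac{1}{\sigma(I)}\int_{I} \lvert h - h(\zeta_{0})\rvert\, d\sigma \to 0$ as the arc $I$ shrinks to $\zeta_{0}$. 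Subtracting $h(\zeta_{0}) C_{\sigma}$, it suffices to prove $C_{g\sigma}(z)/C_{\sigma}(z) \to 0$ non-tangentially at such $\zeta_{0}$, where $g = h - h(\zeta_{0})$.

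Splitting $g\sigma = g\sigma|_{I} + g\sigma|_{\mathbb{T}\setminus I}$ for a short arc $I \ni \zeta_{0}$, the Cauchy transform of the outer piece stays bounded as $z \to \zeta_{0}$ in a cone, hence is negligible against $\lvert C_{\sigma}(z)\rvert \to \infty$; so I may assume $g$ is supported on a tiny arc $I$ chosen so that $\int_{J} \lvert g\rvert\, d\sigma \le \varepsilon\, \sigma(J)$ for every subarc $J \subseteq I$ containing $\zeta_{0}$. For the part of $I$ within distance $\asymp \lvert 1-z\rvert$ of $\zeta_{0}$ the estimate is then immediate: writing $I_{z}$ for the arc of length $\lvert 1-z\rvert$ at $\zeta_{0}$, we have $\lvert C_{\sigma}(z)\rvert \ge \tfrac12 P[\sigma](z) \gtrsim \sigma(I_{z})/\lvert I_{z}\rvert$, while $\int_{I_{z}} \frac{\lvert g\rvert\, d\sigma}{\lvert 1 - z\overline{\zeta}\rvert} \lesssim \frac{1}{\lvert I_{z}\rvert}\int_{I_{z}} \lvert g\rvert\, d\sigma \le \varepsilon\, \frac{\sigma(I_{z})}{\lvert I_{z}\rvert} \lesssim \varepsilon\, \lvert C_{\sigma}(z)\rvert$.

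The remaining contribution, from the part of $I$ at distance $\gg \lvert 1-z\rvert$ from $\zeta_{0}$, is where the real difficulty lies: one cannot merely bound $\lvert C_{g\sigma}(z)\rvert \le \int \frac{\lvert g\rvert\, d\sigma}{\lvert 1 - z\overline{\zeta}\rvert}$ and compare, because $C_{\sigma}$ may carry substantial cancellation --- $\lvert C_{\sigma}(z)\rvert$ can be far smaller than $\int \lvert 1 - z\overline{\zeta}\rvert^{-1}\, d\sigma$ --- so the oscillation of $C_{g\sigma}$ must be matched against that of $C_{\sigma}$. To handle this I would pass to the density-plus-maximal-function framework. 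After normalizing $\sigma(\mathbb{T}) = 1$ and the routine reduction to $\sigma$ singular (its absolutely continuous part is negligible at $(\sigma)_{s}$-a.e.\ point), $\sigma$ is the Aleksandrov--Clark measure at $1$ of an inner function $\theta$, so $C_{\sigma} = (1-\theta)^{-1}$, and by the first part of Proposition \ref{prop:unitarymap}, applied with $b = \theta$ and $\alpha = 1$, one has $V k_{\lambda} = (1 - \overline{\theta(\lambda)})^{-1} k_{\lambda}^{\theta}$ for the Cauchy kernels $k_{\lambda}$, $\lambda \in \mathbb{D}$; since $\theta(z) \to 1$ non-tangentially $\sigma$-a.e., one reads off $V k_{\lambda}(z) \to (1 - \overline{\lambda}\zeta_{0})^{-1} = k_{\lambda}(\zeta_{0})$ at $\sigma$-a.e.\ $\zeta_{0}$. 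As the $k_{\lambda}$ span a dense subspace of $L^{2}(\sigma)$, the theorem follows once one establishes a weak-type bound $\sigma\bigl\{ \zeta : \sup_{z \to \zeta,\ \mathrm{n.t.}} \lvert V h(z)\rvert > t \bigr\} \lesssim t^{-2}\, \lVert h\rVert_{L^{2}(\sigma)}^{2}$ for the non-tangential maximal operator attached to $V$ --- equivalently, a quantitative control of the cancellation in $C_{g\sigma}$ relative to $C_{\sigma}$, which is a statement about the boundary behavior of functions in the model space $K_{\theta}$. Proving this maximal inequality is the main obstacle; everything else is bookkeeping.
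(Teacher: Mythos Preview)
The paper does not prove this theorem. It is quoted as a known result of Poltoratski, with a reference to Theorem~2.7 of \cite{MR1223178}, and is used later as a black box. There is therefore no ``paper's own proof'' to compare your proposal against.

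As for the proposal itself: it is an outline, not a proof. You correctly set up the standard density-plus-maximal-function scheme, verify convergence on the dense set of Cauchy kernels, and then announce that everything reduces to a weak-type $(2,2)$ inequality for the non-tangential maximal operator of $V$ with respect to $\sigma$ --- which you do not prove and explicitly label ``the main obstacle.'' But that maximal inequality \emph{is} the theorem: it is at least as deep as the statement you are trying to establish, and no soft argument produces it. Poltoratski's original proof does not proceed by first proving such a maximal estimate; it goes through a rather different and delicate analysis. So your reduction has not moved the difficulty, only renamed it.

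A secondary point: the ``routine reduction to $\sigma$ singular'' is less routine than you suggest. Writing $\sigma = \sigma_a + \sigma_s$, you need that at $(\sigma)_s$-a.e.\ point both $C_{h\sigma_a}(z)/C_{\sigma}(z) \to 0$ and $C_{\sigma_a}(z)/C_{\sigma}(z) \to 0$ non-tangentially; the second is fine since $\lvert C_{\sigma_s}\rvert \to \infty$ dominates, but the first requires knowing that $C_{h\sigma_a}$ stays controlled at $(\sigma)_s$-a.e.\ point, which for $h$ merely in $L^2(\sigma)$ (hence $h \in L^2(\sigma_a)$, so $h\,d\sigma_a$ need not be a finite measure with bounded Cauchy transform near $\zeta_0$) needs an argument. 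This is fixable, but it is not pure bookkeeping.
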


\subsection{The role of $aH^{2}$}\label{sec:ah2}

For a Hilbert space $H$ and a bounded linear operator $T : H \to H$. An element $x \in H$ is called cyclic for $T$ if $\left\{ T^{n}x : n \geq 0 \right\}$ has dense linear span. For a Hilbert space $X$ of analytic functions invariant under multiplication by independent variable we shall denote by $[f]$ the closure of the linear span of $\left\{ z^{n}f : n \geq 0\right\}$. The linear subspace $[f]$ is called the cyclic subspace generated by $f$. In particular $f$ is a cyclic vector for $M_{z}f = zf$ if and only if $[f] = X$. A cyclic vector for $M_{z}$ will simply be called cyclic.

Since polynomials are dense in every non-extreme $\mathcal{H}(b)$ space it is necessary and sufficient for cyclicity that $1 \in [f]$. From the contractive inclusion $\mathcal{H}(b) \subset H^{2}$ we see that for any polynomial $p$ and $f \in \mathcal{H}(b)$
\begin{equation*}
  \| 1 - pf \|_{b} \geq \| 1 -pf \|_{2},
\end{equation*}
and so any cyclic vector in $\mathcal{H}(b)$ must be cyclic for $H^{2}$ and hence an outer function. Returning now to the unique outer function $a$ satisfying $\lvert a \rvert^{2} + \lvert b \rvert^{2} = 1$ a.e. on $\mathbb{T}$ and $a(0) > 0$ we may consider the space $aH^{2}$ with norm $\| af\|_{a} = \| f \|_{2}$. It follows from the operator inequality $T_{a}T_{a}^{*} \leq I - T_{b}T_{b}^{*}$ that $aH^{2}$ is contained contractively in $\mathcal{H}(b)$. Thus $\mathcal{H}(b)$ splits as
\begin{equation*}
  \mathcal{H}(b) = \text{clos}(aH^{2}) \oplus (aH^{2})^{\perp},
\end{equation*}
where $\text{clos}(aH^{2})$ denotes the closure of $aH^{2}$ in $\mathcal{H}(b)$ and $(aH^{2})^{\perp}$ denotes the orthogonal complement of $aH^{2}$ in $\mathcal{H}(b)$. As we shall see in a moment $\text{clos}(aH^{2})$ is fairly tame and presents little difficulty in terms of the cyclicity problem. Indeed in a sense it behaves very much like the usual space $H^{2}$. To see this we shall need the following result, see (IV-I) in \cite{MR1289670}.

\begin{proposition}\label{thm:J_model}
  Let $b$ be a non-extreme point of the unit ball of $H^{\infty}$. A function $f \in H^{2}$ lies in $\mathcal{H}(b)$ if and only if there exists $f_{1} \in H^{2}$ satisfying
    \begin{equation*}
      T_{\overline{b}}f + T_{\overline{a}}f_{1} = 0,
    \end{equation*}
and in this case $\| f \|_{b}^{2} = \| f \|_{2}^{2} + \| f_{1} \|_{2}^{2}$. We denote by  $J : \mathcal{H}(b) \to H^{2} \oplus H^{2}$ the isometry $Jf = (f,f_{1})$.
\end{proposition}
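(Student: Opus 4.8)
The plan is to realize $\mathcal{H}(b)$ through the operator-range (complementation) machinery, the only structural input being the identity $\lvert a \rvert^{2} + \lvert b \rvert^{2} = 1$ a.e.\ on $\mathbb{T}$. For a bounded operator $B$ between Hilbert spaces write $\mathcal{M}(B)$ for its range equipped with the norm $\| Bx \|_{\mathcal{M}(B)} = \inf \{ \| y \| : By = Bx \}$, so that by definition $\mathcal{H}(b) = \mathcal{M}\big( (I - T_{b}T_{\overline{b}})^{1/2} \big)$; recall the standard fact (see \cite{MR1289670}) that $\mathcal{M}(B_{1}) = \mathcal{M}(B_{2})$ isometrically whenever $B_{1}B_{1}^{*} = B_{2}B_{2}^{*}$. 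It therefore suffices to produce an operator $B$ with $BB^{*} = I - T_{b}T_{\overline{b}}$ whose associated space $\mathcal{M}(B)$ we can read off explicitly.

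The operator to use is built from the column multiplication $W : H^{2} \to H^{2} \oplus H^{2}$, $Wh = (bh, ah)$. Since $\lvert b \rvert^{2} + \lvert a \rvert^{2} = 1$ a.e., $W$ is an isometry, and a one-line computation identifies its adjoint as $W^{*}(g_{0}, g_{1}) = T_{\overline{b}} g_{0} + T_{\overline{a}} g_{1}$. Hence the orthogonal complement of the closed subspace $W H^{2}$ in $H^{2} \oplus H^{2}$ is exactly
\[
  \mathcal{K} := \ker W^{*} = \big\{ (g_{0}, g_{1}) \in H^{2} \oplus H^{2} : T_{\overline{b}} g_{0} + T_{\overline{a}} g_{1} = 0 \big\},
\]
and, $W$ being an isometry, the orthogonal projection of $H^{2} \oplus H^{2}$ onto $\mathcal{K}$ is $P = I - W W^{*}$. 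Let $\pi : H^{2} \oplus H^{2} \to H^{2}$, $\pi(g_{0}, g_{1}) = g_{0}$, denote the first-coordinate projection, so $\pi \pi^{*} = I$ and $\pi W = T_{b}$. Then $B := \pi P$ does the job:
\[
  B B^{*} = \pi P \pi^{*} = \pi \pi^{*} - \pi W W^{*} \pi^{*} = I - T_{b} T_{\overline{b}},
\]
whence $\mathcal{M}(\pi P) = \mathcal{H}(b)$ isometrically. Because the range of $\pi P$ is $\pi(\mathcal{K}) = \{ f \in H^{2} : T_{\overline{b}} f + T_{\overline{a}} f_{1} = 0 \text{ for some } f_{1} \in H^{2} \}$, this is already the set-theoretic description claimed in the proposition.

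For the norm formula the crucial observation is that $\pi$ restricted to $\mathcal{K}$ is injective: if $(0, f_{1}) \in \mathcal{K}$ then $T_{\overline{a}} f_{1} = 0$, i.e.\ $f_{1} \perp a H^{2}$, and $a H^{2}$ is dense in $H^{2}$ since $a$ is outer, so $f_{1} = 0$. Consequently each $f$ in the range of $\pi P$ has a unique preimage $(f, f_{1})$ in $\mathcal{K}$; writing an arbitrary $x \in H^{2} \oplus H^{2}$ as $x = (f, f_{1}) + v$ with $v \in \mathcal{K}^{\perp} = W H^{2}$ gives $\pi P x = f$ and $\| x \|^{2} = \| f \|_{2}^{2} + \| f_{1} \|_{2}^{2} + \| v \|^{2}$, whose infimum over such $x$ is attained at $v = 0$ and equals $\| f \|_{\mathcal{M}(\pi P)}^{2} = \| f \|_{b}^{2}$. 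This gives $\| f \|_{b}^{2} = \| f \|_{2}^{2} + \| f_{1} \|_{2}^{2}$ and exhibits $Jf = (f, f_{1})$ as an isometry of $\mathcal{H}(b)$ onto $\mathcal{K} \subset H^{2} \oplus H^{2}$. Beyond routine bookkeeping with Toeplitz operators, the only real ingredients are the operator-range identity quoted at the outset and the outerness of $a$; the step I expect to demand the most care is the short computation $B B^{*} = I - T_{b} T_{\overline{b}}$, where it is easy to mishandle an adjoint or a projection.
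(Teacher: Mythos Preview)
Your proof is correct. Note, however, that the paper does not actually give its own proof of this proposition: it simply cites the result as (IV-1) in Sarason's monograph \cite{MR1289670} and moves on. What you have written is precisely the operator-range (de Branges complementation) argument that underlies Sarason's treatment, so in that sense your approach agrees with the one the paper invokes by reference. The key steps --- that $W h = (bh, ah)$ is an isometry because $|a|^{2}+|b|^{2}=1$, that $BB^{*} = \pi(I-WW^{*})\pi^{*} = I - T_{b}T_{\overline{b}}$, and that $\pi|_{\mathcal{K}}$ is injective because $a$ is outer --- are all handled cleanly.
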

In \cite{MR3947672} it was shown that any closed invariant subspace $\mathcal{M}$ of $\mathcal{H}(b)$ is of the form
\begin{equation}\label{eq:invariantsubspaces}
  \mathcal{M} = \left\{ g \in \mathcal{H}(b) : \frac{g}{\psi}, \frac{g}{\psi}\psi_{1} \in H^{2} \right\},
\end{equation}
for some $\psi \in \mathcal{H}(b)$ and $J\psi = (\psi, \psi_{1})$. Since the right-hand side is invariant under multipliers of $\mathcal{H}(b)$ we have $U [f] \subset [f]$ for all multipliers $U$ of $\mathcal{H}(b)$.

The next result is probably well-known to experts, however, we could not find it in the literature. It concerns the so-called $(F)$ property of $\mathcal{H}(b)$. The proof is short and relies on a classical Theorem of S.A. Vinogradov on division of Cauchy integrals by inner functions \cite{MR0586560}.

\begin{lemma}\label{lemma:divisionofinner}
  Let $f \in \mathcal{H}(b)$ and denote the inner factor of $f$ by $\theta$. Then $f/\theta \in \mathcal{H}(b)$.
  \begin{proof}
    We may assume the Aleksandrov-Clark measure $\mu_{1}$ is absolutely continuous. Let $\phi = a/(1-b)$ and $g \in H^{2}/\phi$ be the unique function, such that $f = Vg$, where $V = V_{1}$ is the unitary map from Proposition \ref{prop:unitarymap}. Denote by $\mu = \lvert \phi \rvert^{2}m$ and recall that for a Borel measure $\nu$ we let $C_{\nu}$ denote the Cauchy transform of $\nu$ and for $h \in L^{1}(\nu)$
    \begin{equation*}
      C_{\nu}h(z) = \int_{\mathbb{T}} \frac{h(\zeta)d\nu(\zeta)}{1-z\overline{\zeta}}. 
    \end{equation*}
    From the equality
    \begin{equation*}
      f = Vg = (1-b)C_{\mu}g,
    \end{equation*}
    and using that $(1-b)$ has no inner factor we see that $\theta^{-1}C_{\mu}g \in H^{p}$, $0 < p < 1$. Thus applying the variant of Vinogradov's theorem contained in Theorem 3.4. of \cite{MR1223178} we have
    \begin{equation*}
      f/\theta = (1-b)C_{\overline{\theta}\mu}g = (1-b)C_{\mu}(\frac{T_{\theta}^{*}(\phi g)}{\phi}),
    \end{equation*}
    where $T_{\theta}$ is the Toeplitz operator with symbol $\theta$. Since $T_{\theta}^{*}(\phi g)/\phi \in H^{2}/\phi$ we have $f/\theta \in \mathcal{H}(b)$.
  \end{proof}
\end{lemma}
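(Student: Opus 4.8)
The plan is to use the normalized-Cauchy-transform model from Proposition~\ref{prop:unitarymap} to transport the assertion $f/\theta\in\mathcal H(b)$ into a statement about dividing a Cauchy integral by an inner function. Concretely, I would first assume (harmlessly, by the normalization discussion preceding the lemma) that $\mu_1$ is absolutely continuous, set $\phi=a/(1-b)$, $\mu=|\phi|^2m$, and let $V=V_1$ be the unitary from $P^2(\mu)=H^2/\phi$ onto $\mathcal H(b)$. Given $f\in\mathcal H(b)$, there is a unique $g\in H^2/\phi$ with $f=Vg=(1-b)C_\mu g$. The key structural observation is that $1-b$ is outer — indeed $\mathrm{Re}\,\tfrac{1+b}{1-b}>0$ so $1-b$ has no zeros in $\mathbb D$ and, being of the form $2/(1+\text{Herglotz})$, carries no singular inner factor — hence the inner factor of $f$ and the inner factor of the Cauchy integral $C_\mu g$ coincide. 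Thus $\theta^{-1}C_\mu g$ is analytic in $\mathbb D$, and since $C_\mu g\in H^p$ for every $p<1$ (Cauchy transforms of finite measures lie in all $H^p$, $p<1$), we get $\theta^{-1}C_\mu g\in H^p$, $0<p<1$.

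The second step is to invoke Vinogradov's theorem on division of Cauchy transforms by inner functions, in the precise form recorded as Theorem~3.4 of \cite{MR1223178}: if $C_\nu h/\theta\in H^p$ for some $p$ with $0<p<1$, then $C_\nu h/\theta = C_{\overline\theta\nu}h$, i.e.\ dividing the Cauchy integral by $\theta$ amounts to multiplying the underlying measure by $\overline\theta$. Applying this with $\nu=\mu$ and the density $g$ yields
\begin{equation*}
  f/\theta = (1-b)\,C_{\overline\theta\mu}g = (1-b)\,C_\mu\!\left(\frac{T_\theta^{*}(\phi g)}{\phi}\right),
\end{equation*}
where the last equality is just the identity $\overline\theta\,|\phi|^2 g\,m$ having density $\overline\theta\phi g/\overline\phi$ against $m$, and $P_+(\overline\theta\phi g)=T_\theta^{*}(\phi g)$ — precisely the standard rewriting of ``multiply the measure by $\overline\theta$'' as ``apply the co-analytic Toeplitz operator $T_\theta^{*}$ to the $H^2$-numerator and divide again by $\phi$.''

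The final step is bookkeeping: since $\phi g\in H^2$ and $T_\theta^{*}$ maps $H^2$ to $H^2$, we have $T_\theta^{*}(\phi g)/\phi\in H^2/\phi=P^2(\mu)$, so the displayed expression for $f/\theta$ is exactly $V\big(T_\theta^{*}(\phi g)/\phi\big)\in\mathcal H(b)$, which is what we want. I expect the main obstacle to be the justification that the inner factor of $(1-b)C_\mu g$ really equals $\theta$ and not something smaller — i.e.\ that $1-b$ contributes no inner factor and that $C_\mu g$ has no extra inner factor beyond that forced by $f$ — together with verifying the hypotheses of the cited Vinogradov-type theorem (membership of $C_\mu g/\theta$ in $H^p$ for some $p<1$, which is why one works below exponent $1$ rather than in $H^2$). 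Everything else is a routine unwinding of the Cauchy-transform formalism.
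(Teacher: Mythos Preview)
Your proposal is correct and follows essentially the same argument as the paper's proof: reduce to the absolutely continuous Clark measure, write $f=(1-b)C_\mu g$, use that $1-b$ is outer so the inner factor of $f$ coincides with that of $C_\mu g$, apply the Vinogradov-type division theorem (Theorem~3.4 of \cite{MR1223178}) to obtain $f/\theta=(1-b)C_\mu\big(T_\theta^{*}(\phi g)/\phi\big)$, and conclude via $T_\theta^{*}(\phi g)/\phi\in H^2/\phi$. The additional justifications you supply (outerness of $1-b$, $C_\mu g\in H^p$ for $p<1$) are exactly the points the paper leaves implicit.
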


\begin{remark}
    The author was informed by Emmanuel Fricain that this appears in \cite{MR3617311} as Theorem 18.16 and Corollary 18.17.
\end{remark}

We are now ready to prove that cyclic subspaces of $\mathcal{H}(b)$ preserve inner factors and that for outer functions $aH^{2} \subset [f]$.

\begin{proposition}\label{prop:ah2}
  Let $\mathcal{H}(b)$ be a non-extreme de Branges-Rovnyak space and $f \in \mathcal{H}(b)$. Denote by $\theta$ the inner factor of $f$. Then $a\theta H^{2} \subset [f] \subset \theta \mathcal{H}(b)$. In particular if $f$ is outer $aH^{2} \subset [f]$.
  \begin{proof}
    Since $\mathcal{H}(b) \subset H^{2}$ and $aH^{2} \subset \mathcal{H}(b)$ the function $a$ is a multiplier of $\mathcal{H}(b)$. Thus by the discussion following equation \eqref{eq:invariantsubspaces} we see that $a[f] \subset [f]$. For any $h \in H^{2}$ and polynomials $p$ and $q$ we have
    \begin{equation*}
      \| a\theta h - pf\|_{b} \leq \| a\theta h - aqf \|_{b} + \| aqf - pf\|_{b} \leq \| \theta (h - qf\theta^{-1})\|_{2} + \| aqf - pf\|_{b}.
    \end{equation*}
    For $\epsilon > 0$ and using that $f\theta^{-1}$ is an outer function we can choose $q$, such that $\| h - qf\theta^{-1}\|_{2} < \epsilon$. Since $aqf \in [f]$ we can choose $p$, such that $\| aqf - pf\|_{b} < \epsilon$. Hence $a\theta h \in [f]$. Also if $p_{n}f \to g \in \mathcal{H}(b)$ for some sequence of polynomials $p_{n}$ we also have $p_{n}f \to g$ in $H^{2}$ and so $g \theta^{-1} \in H^{2}$. An application of Lemma \ref{lemma:divisionofinner} gives $g\theta^{-1} \in \mathcal{H}(b)$.
  \end{proof}
\end{proposition}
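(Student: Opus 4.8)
The plan is to establish the two inclusions $a\theta H^{2}\subset[f]$ and $[f]\subset\theta\mathcal{H}(b)$ separately; the final assertion follows at once, since an outer $f$ has inner factor $\theta$ equal to a unimodular constant, so $a\theta H^{2}=aH^{2}$ and $\theta\mathcal{H}(b)=\mathcal{H}(b)$. Two structural facts will carry the argument. First, the contractive inclusion $aH^{2}\subset\mathcal{H}(b)$, with $aH^{2}$ normed by $\|ag\|_{a}=\|g\|_{2}$, yields the estimate $\|ag\|_{b}\le\|g\|_{2}$ for every $g\in H^{2}$; in particular $a\cdot\mathcal{H}(b)\subset\mathcal{H}(b)$, so $a$ is a (bounded) multiplier of $\mathcal{H}(b)$. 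Second, the description \eqref{eq:invariantsubspaces} of the invariant subspaces shows that $[f]$ is invariant under every multiplier of $\mathcal{H}(b)$, hence $a[f]\subset[f]$. I also write $F=f/\theta$ for the outer part of $f$, so that $F\in H^{2}$ and, being outer, $F$ is cyclic in $H^{2}$.

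For $a\theta H^{2}\subset[f]$, fix $h\in H^{2}$ and $\varepsilon>0$ and, for polynomials $p,q$, split
\begin{equation*}
  \|a\theta h-pf\|_{b}\le\|a\theta h-aqf\|_{b}+\|aqf-pf\|_{b}.
\end{equation*}
Because $f=\theta F$, we have $\theta h-qf=\theta(h-qF)\in H^{2}$, so the first term is at most $\|\theta(h-qF)\|_{2}=\|h-qF\|_{2}$, which is $<\varepsilon$ for a suitable polynomial $q$ by cyclicity of $F$ in $H^{2}$. With this $q$ fixed, $aqf\in a[f]\subset[f]$, so the second term is $<\varepsilon$ for a suitable polynomial $p$. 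Hence $a\theta h\in[f]$. For $[f]\subset\theta\mathcal{H}(b)$, suppose $p_{n}f\to g$ in $\mathcal{H}(b)$. The contractive inclusion $\mathcal{H}(b)\subset H^{2}$ gives $p_{n}f\to g$ in $H^{2}$ as well; since each $p_{n}f=p_{n}\theta F$ lies in the norm-closed subspace $\theta H^{2}$ of $H^{2}$, we get $g/\theta\in H^{2}$. Lemma \ref{lemma:divisionofinner} then upgrades this to $g/\theta\in\mathcal{H}(b)$, i.e.\ $g\in\theta\mathcal{H}(b)$.

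The one genuinely delicate point is the inclusion $a\theta H^{2}\subset[f]$: the inner function $\theta$ need not itself be a multiplier of $\mathcal{H}(b)$, so one cannot simply factor it out of the $\mathcal{H}(b)$-norm. The device is to keep $\theta$ attached to the $H^{2}$-factor and peel off only $a$, exploiting the estimate $\|ag\|_{b}\le\|g\|_{2}$ for $g\in H^{2}$; this reduces the whole matter to the classical cyclicity of the outer function $F$ in $H^{2}$. For the reverse inclusion the essential input is Lemma \ref{lemma:divisionofinner} (the $(F)$-property of $\mathcal{H}(b)$): without it one would only obtain $[f]\subset\{g\in H^{2}:g/\theta\in H^{2}\}$ rather than $[f]\subset\theta\mathcal{H}(b)$.
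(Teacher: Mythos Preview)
Your proof is correct and follows essentially the same approach as the paper's: both arguments use that $a$ is a multiplier of $\mathcal{H}(b)$ to get $a[f]\subset[f]$, the same triangle-inequality splitting together with the contractive estimate $\|ag\|_{b}\le\|g\|_{2}$ and outer-cyclicity in $H^{2}$ for the inclusion $a\theta H^{2}\subset[f]$, and the same appeal to Lemma~\ref{lemma:divisionofinner} for $[f]\subset\theta\mathcal{H}(b)$. The only differences are cosmetic (you name $F=f/\theta$ and make the contractive estimate explicit).
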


The above result implies the following classification of cyclic vectors in the case when $aH^{2}$ is dense in $\mathcal{H}(b)$.

\begin{corollary}\label{cor:phi_exposed}
  Let $\mathcal{H}(b)$ be a non-extreme de Branges-Rovnyak space and suppose $aH^{2} \subset \mathcal{H}(b)$ is dense. Then $f \in \mathcal{H}(b)$ is cyclic if and only if $f$ is outer.
\end{corollary}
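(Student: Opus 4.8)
The plan is to deduce this corollary directly from Proposition \ref{prop:ah2}. First I would recall that, as noted in the discussion just before the statement, any cyclic vector $f$ in a non-extreme $\mathcal{H}(b)$ must be outer, since the contractive inclusion $\mathcal{H}(b) \subset H^{2}$ forces $f$ to be cyclic in $H^{2}$, and the cyclic vectors of the forward shift on $H^{2}$ are precisely the outer functions (Beurling's theorem). This settles the ``only if'' direction and requires nothing new.

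For the ``if'' direction, suppose $f \in \mathcal{H}(b)$ is outer. Then the inner factor $\theta$ of $f$ is constant, so Proposition \ref{prop:ah2} gives $aH^{2} \subset [f]$. Since $[f]$ is a closed subspace of $\mathcal{H}(b)$ containing $aH^{2}$, it contains $\operatorname{clos}(aH^{2})$, the closure of $aH^{2}$ in the $\mathcal{H}(b)$-norm. By hypothesis $aH^{2}$ is dense in $\mathcal{H}(b)$, i.e. $\operatorname{clos}(aH^{2}) = \mathcal{H}(b)$, whence $[f] = \mathcal{H}(b)$ and $f$ is cyclic.

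There is really no main obstacle here: the corollary is an immediate combination of the Beurling-type necessity observation with the inclusion $aH^{2} \subset [f]$ for outer $f$ from Proposition \ref{prop:ah2}. The only point deserving a word of care is that $[f]$ is norm-closed in $\mathcal{H}(b)$ by definition, so that containing the (not necessarily closed) set $aH^{2}$ already forces it to contain $\operatorname{clos}(aH^{2})$; once that is observed, the density hypothesis finishes the argument.
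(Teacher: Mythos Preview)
Your proof is correct and follows exactly the approach the paper intends: the necessity is the observation made earlier in Section~\ref{sec:ah2} that cyclic vectors must be outer, and the sufficiency is the last sentence of Proposition~\ref{prop:ah2} combined with the density hypothesis.
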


We note that the density of $aH^{2}$ in $\mathcal{H}(b)$ occurs if and only if $\phi^{2} = (a/(1-b))^{2}$ is an exposed point of the unit ball of $H^{1}$. Note also that a precise description of the cyclic vectors in the case $\dim (aH^{2})^{\perp} < \infty$ can also be obtained from the above result. We defer the proof of this to Section \ref{sec:suff}.

\section{A model for $M_{z}^{*}$ on $(aH^{2})^{\perp}$}\label{sec:model}

In the previous section we have seen that the problem of cyclicity relies entirely on understanding the subspace $(aH^{2})^{\perp} \subset \mathcal{H}(b)$. Motivated by this we introduce a normalized Cauchy transform model for $(aH^{2})^{\perp}$. We shall consider a collection of spaces that have appeared in \cite{MR1734331} in connection with the problem of describing the exposed points of the unit ball of $H^{1}$. For an outer function $\phi \in H^{2}$ consider the weight $w =\lvert \phi \rvert^{2} \geq 0$, $w \in L^{1}$, and $\log w \in L^{1}$. The closure of the analytic polynomials,  $\text{Span} (\left\{ z^{n} : n \geq 0 \right\})$, in the space $L^{2}(w)$ coincides with $H^{2}/\phi$. Similarly for the anti-analytic polynomials, $\text{Span} (\left\{ z^{n} : n < 0\right\})$, the closure in $L^{2}(w)$ is $\overline{H^{2}_{0}}/\overline{\phi}$, where $\overline{H^{2}_{0}}$ is the orthogonal complement of $H^{2}$ in $L^{2}$. We shall be interested in the space
\begin{equation*}
  J_{\phi} = \frac{H^{2}}{\phi} \cap \frac{\overline{H^{2}_{0}}}{\overline{\phi}}.
\end{equation*}
It can be shown that $J_{\phi} = \left\{ 0 \right\}$ is equivalent to $\phi^{2}$ being an exposed point of the unit ball of $H^{1}$. Indeed from the current perspective it can be seen by observing that $\ker(T_{\overline{\phi}/\phi}) = \left\{ h \in H^{2} : h/\phi \in J_{\phi} \right\}$ and so $J_{\phi} = \left\{ 0 \right\}$ is equivalent to triviality of the kernel of $T_{\overline{\phi}/\phi}$. The result now follows from (X-2) in \cite{MR1289670}. The space $J_{\phi}$ consists of so-called pseudocontinuable functions. For $f \in J_{\phi}$ we can identify $f$ in the interior disk by its representation as a $H^{2}/\phi$ function and in the exterior disk, $\mathbb{D}^{e} = \left\{ z \in \mathbb{C}_{\infty} : \lvert z \rvert > 1 \right\}$, via the representation as a function in $\overline{H^{2}_{0}}/\overline{\phi}$. Thus $J_{\phi}$ consists of functions analytic in $C_{\infty} \setminus \mathbb{T}$, such that the non-tangential limits from outside and inside the unit disk coincide a.e. For definiteness we record the defining formulas for the extension of $f \in J_{\phi}$ to $\mathbb{C}_{\infty} \setminus \mathbb{T}$
\begin{equation*}
    f(z) = \phi(z)^{-1} \int_{\mathbb{T}} P(z, \zeta) \phi(\zeta)f(\zeta)dm(\zeta) \text{, for } z \in \mathbb{D},
\end{equation*}
and
\begin{equation*}
    f(z) = \overline{\phi}(1/\overline{z})^{-1} \int_{\mathbb{T}} P(1/\overline{z}, \zeta) \overline{\phi(\zeta)}f(\zeta)dm(\zeta) \text{, for } z \in \mathbb{D}^{e},
\end{equation*}
where $P(z,\zeta) = (1-\lvert z \rvert^{2})/\lvert z - \zeta\rvert^{2}$ is the Poisson kernel. The next theorem identifies $(aH^{2})^{\perp}$ as the space of normalized Cauchy transforms of $J_{\phi}$. We assume $b(0) = 0$ and $\mu_{1}$ is absolutely continuous.
\begin{theorem}\label{thm:model}
  Let $V = V_{1}: H^{2}/\phi \to \mathcal{H}(b)$ be the unitary map from Proposition \ref{prop:unitarymap}. The following holds:
  \begin{enumerate}[(i)]
    \item $V$ maps $J_{\phi}$ onto $(aH^{2})^{\perp}$,
    \item $V^{-1}M_{z}^{*}V = L$, where $Lf(z) = z^{-1}(f(z)-f(0))$.
  \end{enumerate}
  \begin{proof}
    For $\lambda \in \mathbb{D}$ we let $k_{\lambda}(z) = (1-\overline{\lambda}z)^{-1}$. For $f \in H^{2}$ there exists a unique function $g \in H^{2}/\phi$ with $af = Vg$. A simple computation based on Cauchy's formula gives $af = V(f/\bar{\phi})$. Hence
    \begin{equation*}
        0 = \int_{\mathbb{T}} \frac{f/\overline{\phi}-g}{1-\lambda \overline{\zeta}}\lvert \phi \rvert^{2}dm(\zeta) = \langle \phi (f/\overline{\phi} - g), \phi k_{\lambda} \rangle_{2},
    \end{equation*}
    for all $\lambda \in \mathbb{D}$. Since $\phi$ is outer the family $(\phi k_{\lambda})_{\lambda \in \mathbb{D}}$ has dense linear span in $H^{2}$ and thus $\phi f/\bar{\phi} - g \phi = v \in \overline{H^{2}_{0}}$. Rearranging we have $g = f/\bar{\phi} - v/\phi$. Then for $h \in H^{2}/\phi$ we have
    \begin{equation*}
      \langle Vh, af \rangle_{b} = \langle h, g \rangle_{\lvert \phi \rvert^{2}dm} = \int_{\mathbb{T}} (h\bar{\phi})\bar{f} dm - \int_{\mathbb{T}} h \phi \bar{v} dm = \int_{\mathbb{T}} (h\bar{\phi})\bar{f} dm,
    \end{equation*}
    The last integral is $0$ for all $f \in H^{2}$ if and only if $h\bar{\phi} \in \overline{H^{2}_{0}}$, which completes the proof of $(i)$. Part $(ii)$ is a special case of the identity $V^{-1}M_{U}^{*}V = T_{U}^{*}$ valid for any multiplier $U$ of $\mathcal{H}(b)$. Indeed by the proof of Proposition \ref{prop:unitarymap} we have the following
    \begin{equation*}
        \begin{split}
            M_{U}^{*}Vk_{\lambda} = (1-\overline{b(\lambda)})^{-1}M_{U}^{*}k_{\lambda}^{b} = (1-\overline{b(\lambda)})^{-1}\overline{U(\lambda)}k_{\lambda}^{b} \\ = \overline{U(\lambda)}Vk_{\lambda} = V\overline{U(\lambda)}k_{\lambda} = VT^{*}_{U}k_{\lambda} \text{, for each } \lambda \in \mathbb{D}.
        \end{split}
    \end{equation*}
    The result now follows from the completeness of the kernels.
  \end{proof}
\end{theorem}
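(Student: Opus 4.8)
The plan is to prove $(i)$ and $(ii)$ separately, in both cases reducing matters to the Cauchy kernels $k_{\lambda}(z) = (1-\overline{\lambda}z)^{-1}$, $\lambda \in \mathbb{D}$, which (by the proof of Proposition \ref{prop:unitarymap}) are complete in $P^{2}(\mu_{1}) = H^{2}/\phi$, and to the fact that $\{\phi k_{\lambda}\}_{\lambda \in \mathbb{D}}$ is complete in $H^{2}$ since $\phi$ is outer.

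For $(i)$, the first observation I would make is the identity $af = V(f/\overline{\phi})$ for $f \in H^{2}$, where $V$ on the right is read as the integral operator on $L^{2}(\mu_{1})$: since $\lvert\phi\rvert^{2}/\overline{\phi} = \phi$ on $\mathbb{T}$ and $f\phi \in H^{1}$, Cauchy's formula gives $V(f/\overline{\phi})(z) = (1-b(z))(f\phi)(z) = af(z)$, using $\phi = a/(1-b)$. Since $f/\overline{\phi}$ need not belong to $H^{2}/\phi$, the actual preimage $g := V^{-1}(af) \in H^{2}/\phi$ differs from it by an element of $\ker V$; the crucial point is to show $\phi(f/\overline{\phi} - g) \in \overline{H^{2}_{0}}$. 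This follows by testing: from $Vg = V(f/\overline{\phi})$ and $1-b \neq 0$ on $\mathbb{D}$ one obtains $\langle \phi(f/\overline{\phi}-g), \phi k_{\lambda}\rangle_{2} = 0$ for every $\lambda \in \mathbb{D}$, whence $\phi(f/\overline{\phi}-g) \perp H^{2}$ by completeness of $\{\phi k_{\lambda}\}$. Writing $v := \phi(f/\overline{\phi}-g) \in \overline{H^{2}_{0}}$, so that $g = f/\overline{\phi} - v/\phi$, I would then compute $\langle Vh, af\rangle_{b} = \langle h, g\rangle_{L^{2}(\mu_{1})}$ for $h \in H^{2}/\phi$; this collapses to $\langle h\overline{\phi}, f\rangle_{2}$, the cross-term involving $v$ vanishing because $h\phi \in H^{2}$ is orthogonal to $v \in \overline{H^{2}_{0}}$. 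Hence $Vh \perp aH^{2}$ if and only if $h\overline{\phi} \in \overline{H^{2}_{0}}$, i.e. $h \in \overline{H^{2}_{0}}/\overline{\phi}$; together with the constraint $h \in H^{2}/\phi$ this says precisely $h \in J_{\phi}$, and since $V$ is onto we get $V(J_{\phi}) = (aH^{2})^{\perp}$.

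For $(ii)$, I would invoke the elementary fact that in any reproducing kernel Hilbert space carrying a bounded multiplier $M_{U}$ one has $M_{U}^{*}k_{\lambda}^{b} = \overline{U(\lambda)}k_{\lambda}^{b}$; applied to $U = z$ (a multiplier since $\mathcal{H}(b)$ is shift invariant) this gives $M_{z}^{*}k_{\lambda}^{b} = \overline{\lambda}k_{\lambda}^{b}$. Combining this with $Vk_{\lambda} = (1-\overline{b(\lambda)})^{-1}k_{\lambda}^{b}$ from the proof of Proposition \ref{prop:unitarymap} and the trivial identity $Lk_{\lambda} = \overline{\lambda}k_{\lambda}$, I obtain $M_{z}^{*}Vk_{\lambda} = \overline{\lambda}Vk_{\lambda} = VLk_{\lambda}$ for all $\lambda \in \mathbb{D}$. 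Since $\{k_{\lambda}\}_{\lambda \in \mathbb{D}}$ is complete in $H^{2}/\phi$ and both $M_{z}^{*}V$ and $VL$ are bounded ($L$ leaving $H^{2}/\phi$ invariant), this forces $V^{-1}M_{z}^{*}V = L$.

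The step I expect to be the real obstacle is the identification of $g = V^{-1}(af)$ inside $(i)$, that is, the passage from $Vg = V(f/\overline{\phi})$ to $\phi(f/\overline{\phi}-g) \in \overline{H^{2}_{0}}$: this is exactly where the outerness of $\phi$ (via density of $\{\phi k_{\lambda}\}$ in $H^{2}$) and the absence of a nontrivial inner factor for $1-b$ enter. Everything afterwards is bookkeeping with the $L^{2}(\mu_{1})$ inner product, and part $(ii)$ is routine once the kernel-eigenvector identity is recorded.
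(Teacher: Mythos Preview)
Your proposal is correct and follows essentially the same route as the paper's proof: for $(i)$ you use the Cauchy-formula identity $af = V(f/\overline{\phi})$, identify the true preimage $g$ via testing against $\{\phi k_{\lambda}\}$ and outerness of $\phi$, and reduce $\langle Vh, af\rangle_{b}$ to $\int h\overline{\phi}\,\overline{f}\,dm$; for $(ii)$ you use the kernel-eigenvector identity $M_{z}^{*}k_{\lambda}^{b} = \overline{\lambda}k_{\lambda}^{b}$ together with $Vk_{\lambda} = (1-\overline{b(\lambda)})^{-1}k_{\lambda}^{b}$ and completeness. The only cosmetic difference is that the paper records the slightly more general intertwining $V^{-1}M_{U}^{*}V = T_{U}^{*}$ for an arbitrary multiplier $U$ before specializing to $U=z$, whereas you go directly to $U=z$.
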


\section{The spectrum of the adjoint of the shift}\label{sec:spec}

In this section, we choose the normalization $b(0) = 0$ and $\mu_{1}$ absolutely continuous, where $\mu_{1}$ is the Aleksandrov-Clark measure of $b$ at the point $1$. By Theorem \ref{thm:model} in the previous section the spectrum of $M_{z}^{*}$ restricted to $(aH^{2})^{\perp}$ is equal to the spectrum of the backwards shift $L$ on $J_{\phi}$, $\phi = a/(1-b)$. It turns out that the spectrum of $L$ on $J_{\phi}$ is related to analytic continuation of functions in $J_{\phi}$. This makes it easier to study the spectrum of $M_{z}^{*}$ by considering $L$ on $J_{\phi}$ instead. For $\lambda$ in the resolvent set of $L$ and $h \in J_{\phi}$ we have
\begin{equation}\label{eq:resolvent}
  (I - \lambda L)^{-1}h(z) = \frac{zh(z)- \lambda h(\lambda)}{z - \lambda}.
\end{equation}
Also if $k$ is the reproducing kernel at $0$, i.e. $h(0) = \langle h, k \rangle_{\phi}$, then
\begin{equation}\label{eq:fnc}
  h(\lambda) = \langle (I - \lambda L)^{-1}h, k \rangle_{\phi}.
\end{equation}
Actually, if all functions in $J_{\phi}$ are analytic at a point $\lambda$, then the resolvent is given by \eqref{eq:resolvent} (the uniform boundedness principle implies the point evaluation at $\lambda \in \mathbb{T}$ is bounded), conversely, if $\lambda \in \rho(L)^{-1}$, then every function in $J_{\phi}$ is analytic at $\lambda$ by \eqref{eq:fnc}. Thus $\sigma(L)^{-1}$ coincides with the set of points such that at least one function in $J_{\phi}$ does not extend analytically to $\lambda$. Clearly $\sigma(L)^{-1} \subset \mathbb{T}$ and so it equals $\overline{\sigma(L)}$.
\begin{definition}\label{def:specf}
  For an outer function $\phi \in H^{2}$ we denote by $\sigma(\phi)$ the set $\overline{\sigma(L)}$ constructed above. A point $e^{i\theta} \in \sigma(\phi)$ is called a point of local non-exposure for $\phi$.
\end{definition}

Clearly, for an outer function $\phi \in H^{2}$ we have $\sigma(\phi) = \varnothing$ if and only if $\phi^{2}$ is exposed, that is if it has no points of local non-exposure. Unfortunately, it seems to be very difficult in general to determine if a point $e^{i\theta} \in \mathbb{T}$ lies in $\sigma(\phi)$. Indeed this is equivalent to the problem of describing the exposed points of $H^{1}$. The remainder of this section is devoted to giving criteria for inclusion and exclusion in $\sigma(\phi)$.

If $\phi$ is ``not too small'' on an arc $I \subset \mathbb{T}$, then every function in $J_{\phi}$ extends analytically across $I$. The next proposition makes this statement precise.

\begin{proposition}\label{prop:spec_phi}
  Let $\phi$ be an outer function in $H^{2}$.
  \begin{enumerate}[(i)]
    \item If $\phi^{-1}$ is square summable on an arc $I \subset \mathbb{T}$, then $\sigma(\phi) \cap I = \varnothing$.
    \item If $\phi \in A = H^{\infty} \cap C(\mathbb{T})$, then $\sigma(\phi) \subset Z(\phi) = \left\{ \zeta \in \mathbb{D} \cup \mathbb{T} : \phi(\zeta) = 0 \right\}$.
  \end{enumerate}
  \begin{proof}
    Let $h \in J_{\phi}$, then $h = \phi h \phi^{-1}$ and hence $h$ is summable on $I$, which by Morera's theorem (see Ex. 2.12. in \cite{garnett2006bounded}) implies $h$ is analytic across $I$, this proves $(i)$. For $(ii)$ it suffices to notice that if $\phi(\zeta) \neq 0$ for some $\zeta \in \mathbb{T}$ then there exists $\eta > 0$ and an arc $\zeta \in I \subset \mathbb{T}$, such that $\lvert \phi \rvert > \eta$ on $I$ and apply Morera's theorem to $h = \phi h\phi^{-1}$, $h \in J_{\phi}$.
  \end{proof}
\end{proposition}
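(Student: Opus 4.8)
The plan is to derive both parts from the description of $\sigma(\phi)$ obtained just above Definition~\ref{def:specf}: a point $\zeta\in\mathbb{T}$ belongs to $\sigma(\phi)$ precisely when some $h\in J_{\phi}$ fails to extend analytically to $\zeta$. So it suffices, in each case, to show that under the stated hypothesis every element of $J_{\phi}$ continues analytically across the arc in question.

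First I would treat (i). Fix $h\in J_{\phi}$. Since $h\in H^{2}/\phi$ we have $\phi h\in H^{2}\subset L^{2}(\mathbb{T})$, and by hypothesis $\phi^{-1}\in L^{2}(I)$, so Cauchy--Schwarz gives $h=(\phi h)\,\phi^{-1}\in L^{1}(I)$. On the other hand, membership in $J_{\phi}$ means $h$ is analytic on $\mathbb{C}_{\infty}\setminus\mathbb{T}$ and its non-tangential limits from $\mathbb{D}$ and from $\mathbb{D}^{e}$ coincide a.e.\ on $\mathbb{T}$ (the pseudocontinuation property spelled out before Theorem~\ref{thm:model}). The conclusion I want is a Morera-type principle: a function that is analytic off $\mathbb{T}$, whose two boundary functions agree a.e.\ on $I$, and whose common boundary function is locally integrable on $I$, is in fact analytic across $I$. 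Concretely one verifies $\int_{\partial T}h\,dz=0$ for every small triangle $T$ straddling $I$ by splitting $T$ along $\mathbb{T}$, applying Cauchy's theorem to the two analytic pieces, and using the matching $L^{1}$ boundary values to see the two boundary contributions cancel (this is essentially Exercise~2.12 in \cite{garnett2006bounded}). Having shown that every $h\in J_{\phi}$ extends analytically across $I$, the characterization of $\sigma(\phi)$ yields $\sigma(\phi)\cap I=\varnothing$.

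For (ii), suppose $\phi\in A=H^{\infty}\cap C(\mathbb{T})$ and take $\zeta\in\mathbb{T}$ with $\phi(\zeta)\neq 0$. By continuity there are $\eta>0$ and an open arc $\zeta\in I\subset\mathbb{T}$ on which $\lvert\phi\rvert\geq\eta$; then $\phi^{-1}$ is bounded, hence square summable, on $I$, and part (i) gives $\zeta\notin\sigma(\phi)$. Since $\sigma(\phi)\subset\mathbb{T}$ and $\phi$, being outer, has no zeros in $\mathbb{D}$, this shows $\sigma(\phi)\subset\{\zeta\in\mathbb{T}:\phi(\zeta)=0\}=Z(\phi)$.

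The only non-routine ingredient is the Morera-type continuation step in (i): one must be careful that mere local integrability of the boundary function is enough to justify pushing the contour integral onto $\mathbb{T}$ and cancelling the inside and outside contributions. Everything else --- the Cauchy--Schwarz estimate, the continuity argument in (ii), and the translation back to $\sigma(\phi)$ via the spectral description of $L$ on $J_{\phi}$ --- is immediate.
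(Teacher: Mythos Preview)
Your proof is correct and follows exactly the same line as the paper's: write $h=(\phi h)\phi^{-1}$ to get local integrability on the arc, invoke the Morera-type continuation principle (Garnett, Ex.~2.12), and for (ii) use continuity of $\phi$ to find an arc where $\phi$ is bounded below. The only cosmetic difference is that you reduce (ii) to (i) explicitly, whereas the paper re-invokes Morera directly.
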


The inclusion $\sigma(\phi) \subset Z(\phi)$ can be proper as demonstrated by the function $\phi^{2} = (1-z) \in H^{1}$, which has a $0$ at $1$, but $\sigma(\phi) = \varnothing$ since it is easy to verify that $\phi^{2}$ is an exposed point.

We now turn to the relation between the Aleksandrov-Clark measures of $b$ and $\sigma(\phi)$. Recall for $\alpha \in \mathbb{T}$ that the Aleksandrov-Clark measures are Borel probability measures (we are still assuming $b(0) = 0$) given by
\begin{equation*}
  \frac{1 + \overline{\alpha} b(z)}{1 - \overline{\alpha}b(z) } = \int_{\mathbb{T}} \frac{\zeta+z}{\zeta-z}d\mu_{\alpha}(\zeta).
\end{equation*}
With this notation we have $d\mu_{1} = \lvert \phi \rvert^{2}dm$. By Theorem \ref{thm:poltoratski} the function $V_{\alpha}h$ converges non-tangentially to $h$ at $(\mu_{\alpha})_{s}$-almost every point and hence for an outer function $f \in \mathcal{H}(b)$ it is a necessary condition for cyclicity that $f$ be nonzero $(\mu_{\alpha})_{s}$-a.e. In light of this, it is natural to ask if this necessary condition is also sufficient. The answer to this question is negative. Indeed Poltoratski (see \cite{MR1889082}) has produced an example of an $\mathcal{H}(b)$ space such that all Aleksandrov-Clark measures $\mu_{\alpha}$ are absolutely continuous, but the function $a$ is not cyclic. However, see Proposition \ref{prop:1+theta/2} in Section \ref{sec:ex}. The next result clarifies the relationship between the support of the singular part of the Aleksandrov-Clark measure and the points of local non-exposure. We shall need the notion of Smirnov class and unbounded Toeplitz operators.

Let $N^{+}$ denote the Smirnov class of quotients of bounded analytic functions in $\mathbb{D}$ with outer denominator. Functions in $N^{+}$ have non-tangential limits a.e. and considering the boundary function we have the Smirnov maximum principle: $H^{p} = L^{p} \cap N^{+}$, for $0 < p \leq \infty$.

For a symbol $U \in L^{2}$ we define the unbounded Toeplitz operator $T_{U}$ by the rule
\begin{equation*}
  T_{U}h(z) = \int_{\mathbb{T}} \frac{U(\zeta)h(\zeta)dm(\zeta)}{1-z\overline{\zeta}}, \text{ for } h \in H^{2},
\end{equation*}
which clearly agrees with the usual definition if $U \in L^{\infty}$.

The proof of the next result is based on ideas from \cite{MR1207406}.
\begin{proposition}
  Let $\phi \in H^{2}$ be an outer function of unit norm and $(\mu_{\alpha})_{\alpha \in \mathbb{T}}$ the associated Aleksandrov-Clark measures. Then $\text{supp}(\mu_{\alpha})_{s} \subset \sigma(\phi)$.
  \begin{proof}
    Let $(\mu_{\alpha})_{s}$ be the singular part of the Aleksandrov-Clark measure $\mu_{\alpha}$. Without loss of generality, we may assume $\alpha \neq 1$ since $\mu_{1}$ is absolutely continuous. Let $\theta$ be the inner function defined by
    \begin{equation*}
      \frac{1 + \theta}{1 - \theta} = \int_{\mathbb{T}} \frac{\zeta + z}{\zeta - z}d(\mu_{\alpha})_{s}(\zeta).
    \end{equation*}
    The function $f = i(1+\theta)/(1-\theta) \in N^{+}$ and is real-valued on $\mathbb{T}$. Suppose, for a moment, that we can show $\phi/(1-\theta) \in H^{2}$, then $\phi f \in H^{2}$ and since $f$ is real-valued on $\mathbb{T}$ we also have $f \overline{\phi} \in \overline{H^{2}}$. Thus after subtracting an appropriate constant $f \in J_{\phi}$. Since $f$ has a singularity at every point in the support of $(\mu_{\alpha})_{s}$ the conclusion follows.

Thus it remains to show $\phi/(1-\theta) \in H^{2}$. It follows from the identity
  \begin{equation*}
    \frac{1-\overline{\alpha}b}{1-\theta} = (1-\overline{\alpha}b) \int_{\mathbb{T}} \frac{1}{1-z\overline{\zeta}}d(\mu_{\alpha})_{s}(\zeta),
  \end{equation*}
  that $(1-\overline{\alpha}b)/(1-\theta) \in \mathcal{H}(b)$. Hence there exists $h \in H^{2}/\phi$, such that $Vh = (1-\overline{\alpha}b)/(1-\theta)$. From this we see
  \begin{equation*}
    \frac{\phi}{1-\theta} = \frac{\phi Vh}{1-\overline{\alpha}b} = T_{\phi_{\alpha}}T_{\overline{\phi}}(\phi h),
  \end{equation*}
  where we recall that $\phi_{\alpha} = a/(1-\overline{\alpha}b)$. Thus if we can show that the operator $T_{\phi_{\alpha}}T_{\overline{\phi}}$ maps $H^{2}$ into $H^{2}$ we are done. Precisely this was shown in \cite{MR1207406}, see also (IV-16) in \cite{MR1289670}.
  \end{proof}
\end{proposition}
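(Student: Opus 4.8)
The plan is to establish the inclusion by producing, for each $\alpha\in\mathbb{T}$ with $(\mu_{\alpha})_{s}\neq 0$, a single function in $J_{\phi}$ that admits no analytic continuation across any point of $\text{supp}(\mu_{\alpha})_{s}$. By the description of $\sigma(\phi)$ recorded just before Definition~\ref{def:specf} (it is the set of points to which at least one function of $J_{\phi}$ fails to extend analytically), this forces every point of $\text{supp}(\mu_{\alpha})_{s}$ into $\sigma(\phi)$. Since $\mu_{1}=\lvert\phi\rvert^{2}m$ is absolutely continuous, we may assume $\alpha\neq 1$.

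First I would package the singular part as an inner function: let $\theta$ be the singular inner function determined by $\frac{1+\theta}{1-\theta}=\int_{\mathbb{T}}\frac{\zeta+z}{\zeta-z}\,d(\mu_{\alpha})_{s}(\zeta)$, and put $f=i\frac{1+\theta}{1-\theta}$. Then $f\in N^{+}$; since $\lvert\theta\rvert=1$ a.e. on $\mathbb{T}$ makes $\frac{1+\theta}{1-\theta}$ purely imaginary there, $f$ is real-valued a.e. on $\mathbb{T}$; and, because a singular inner function extends analytically across an arc exactly when its singular measure assigns that arc no mass, $f$ (like $\theta$) fails to continue analytically across any point of $\text{supp}(\mu_{\alpha})_{s}$. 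It remains to check that a scalar normalization of $f$ lies in $J_{\phi}=\frac{H^{2}}{\phi}\cap\frac{\overline{H^{2}_{0}}}{\overline{\phi}}$. For this it is enough to prove $\phi f\in H^{2}$, equivalently $\phi/(1-\theta)\in H^{2}$: granting this, $f$ real on $\mathbb{T}$ gives $\overline{\phi}f=\overline{\phi f}\in\overline{H^{2}}$, and subtracting the constant $c=\overline{(\phi f)(0)/\phi(0)}$ places $\overline{\phi}(f-c)\in\overline{H^{2}_{0}}$ while keeping $\phi(f-c)\in H^{2}$, so $f-c\in J_{\phi}$, with the same continuation behaviour as $f$.

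The core of the proof is thus the membership $\phi/(1-\theta)\in H^{2}$. Here I would first note, from $\frac{1}{1-\theta}=\tfrac{1}{2}\bigl(1+\int\frac{\zeta+z}{\zeta-z}\,d(\mu_{\alpha})_{s}\bigr)=C_{(\mu_{\alpha})_{s}}+\tfrac{1}{2}\bigl(1-(\mu_{\alpha})_{s}(\mathbb{T})\bigr)$, that $\frac{1-\overline{\alpha}b}{1-\theta}$ is the sum of $(1-\overline{\alpha}b)C_{(\mu_{\alpha})_{s}}=V_{\alpha}(\mathbbm{1}_{E})$, where $E$ is a Lebesgue-null carrier of the singular part, and a scalar multiple of $1-\overline{\alpha}b$. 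Since $\mathbbm{1}_{E}\in L^{2}((\mu_{\alpha})_{s})\subseteq P^{2}(\mu_{\alpha})$ by \eqref{eq:splitting} and $1,b\in\mathcal{H}(b)$, Proposition~\ref{prop:unitarymap} yields $\frac{1-\overline{\alpha}b}{1-\theta}\in\mathcal{H}(b)$. Writing this as $Vh$ with $h\in H^{2}/\phi$ (so $\phi h\in H^{2}$), and recalling $Vh=(1-b)C_{\lvert\phi\rvert^{2}m}h$, $\phi(1-b)=a$, and $a/(1-\overline{\alpha}b)=\phi_{\alpha}$, the expression collapses to
\[
  \frac{\phi}{1-\theta} \;=\; \phi_{\alpha}\,C_{\lvert\phi\rvert^{2}m}h \;=\; \phi_{\alpha}\,T_{\overline{\phi}}(\phi h) \;=\; T_{\phi_{\alpha}}T_{\overline{\phi}}(\phi h),
\]
and the whole matter reduces to the fact that the product of unbounded Toeplitz operators $T_{\phi_{\alpha}}T_{\overline{\phi}}$ maps $H^{2}$ into $H^{2}$ --- precisely the result isolated by Sarason (\cite{MR1207406}; see also (IV-16) in \cite{MR1289670}) --- which I would invoke rather than reprove.

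I expect this last step to be the main obstacle: the $H^{2}$-boundedness of $T_{\phi_{\alpha}}T_{\overline{\phi}}$ is where one genuinely uses the structure of the pair $(a,b)$ rather than formal bookkeeping, and it is the only substantial external input. Two further points must be pinned down with some care: that the singular inner function attached to a singular measure really fails to continue analytically across the closed support of that measure (so that $f$, hence $f-c$, exhibits every point of $\text{supp}(\mu_{\alpha})_{s}$ as a point of local non-exposure), and the standard fact that $b\in\mathcal{H}(b)$ for non-extreme $b$ with $b(0)=0$, which combined with $1=k_{0}^{b}\in\mathcal{H}(b)$ gives $1-\overline{\alpha}b\in\mathcal{H}(b)$.
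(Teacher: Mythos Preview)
Your proposal is correct and follows essentially the same route as the paper: define $\theta$ via the Herglotz transform of $(\mu_\alpha)_s$, reduce to showing $\phi/(1-\theta)\in H^2$, observe that $(1-\overline{\alpha}b)/(1-\theta)\in\mathcal{H}(b)$, rewrite $\phi/(1-\theta)$ as $T_{\phi_\alpha}T_{\overline{\phi}}(\phi h)$, and invoke Sarason's boundedness result. One cosmetic point: $\theta$ need not be a \emph{singular} inner function (it can have zeros in $\mathbb{D}$), so your parenthetical justification should appeal directly to the fact that the Herglotz integral of a positive measure extends analytically across an arc precisely when the measure assigns it no mass---but this does not affect the argument.
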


Poltoratski's example shows that the inclusion $\cup_{\alpha \in \mathbb{T}} \text{supp}(\mu_{\alpha})_{s} \subset \sigma(\phi)$ can be proper.

We end with a Proposition on the point spectrum of $M_{z}^{*}$ on $(aH^{2})^{\perp}$, which is certainly well known. However, we give a new short proof based on Theorem \ref{thm:model} to keep the discussion more self-contained. For $\gamma > 1$ and $\zeta \in \mathbb{T}$ we define the usual non-tangential cone
\begin{equation*}
    \Gamma_{\gamma}(\zeta) = \left\{ z \in \mathbb{D} : \lvert z - \zeta \rvert < \gamma(1-\lvert z \rvert)\right\}.
\end{equation*}

\begin{proposition}\label{prop:eigenvalue}
      Suppose $\zeta \in \mathbb{T}$ and $\overline{\zeta}$ is an eigenvalue of $M_{z}^{*} : (aH^{2})^{\perp} \to (aH^{2})^{\perp}$. Then 
      \begin{enumerate}[(i)]
        \item for each $f \in \mathcal{H}(b)$ the non-tangential limit
            \begin{equation*}
                \lim_{\substack{z \to \zeta \\ z \in \Gamma_{\gamma}(\zeta)}}f(z),
            \end{equation*}
            exists and is finite.
        \item The function $k_{\zeta}^{b}(z) = (1-\overline{b(\zeta)}b(z))/(1-\overline{\zeta}z)$ belongs to $\mathcal{H}(b)$ and for each $f \in \mathcal{H}(b)$ we have $f(\zeta) = \langle f, k_{\zeta}^{b} \rangle$, where $f(\zeta)$ is defined to be the non-tangential limit above.
      \end{enumerate}
      \begin{proof}
          Let $k_{\lambda}$ denote the Cauchy kernel. Since $\overline{\zeta}$ is an eigenvalue of $M_{z}^{*} : (aH^{2})^{\perp} \to (aH^{2})^{\perp}$ we deduce from Theorem \ref{thm:model} that $\overline{\zeta}$ is an eigenvalue of $L : J_{\phi} \to J_{\phi}$. A simple algebraic computation reveals that any eigenvector of $L$ corresponding to the eigenvalue $\overline{\zeta}$ must be a constant multiple of the Cauchy kernel $k_{\zeta}$. Hence $k_{\zeta} \in J_{\phi}$. For $\lambda \in \Gamma_{\gamma}(\zeta)$ and $z \in \mathbb{T}$ we have
          \begin{equation*}
              \gamma \lvert 1 - \overline{\lambda}z \rvert \geq \gamma (1-\lvert \lambda \rvert) > \lvert \lambda - \zeta \rvert  = \lvert \lambda - z + z - \zeta \rvert \geq \lvert z - \zeta \rvert - \lvert \lambda - z \rvert.
          \end{equation*}
          From this we deduce $(1+\gamma)\lvert 1-\overline{\lambda}z \rvert \geq \lvert 1 - \overline{\zeta}z \rvert$, for $\lambda \in \Gamma_{\gamma}(\zeta)$ and $z \in \mathbb{T}$. Hence $k_{\lambda}$ converges to $k_{\zeta}$ non-tangentially in the norm of $H^{2}/\phi$. We have previously seen that $Vk_{\lambda} = (1-b(\lambda))^{-1}k_{\lambda}^{b}$, for $\lambda \in \mathbb{D}$ and hence for $z \in \mathbb{D}$
          \begin{equation*}
            \frac{f(z)}{1-b(z)} = \langle f, (1-\overline{b(z)})^{-1}k_{z}^{b} \rangle_{b} = \langle VV^{-1}f,Vk_{z} \rangle_{b} = \langle V^{-1}f,k_{z} \rangle_{H^{2}/\phi}.
          \end{equation*}
          Thus $(1-b(z))^{-1}f$ converges non-tangentially as $z \to \zeta$ for each $f\in \mathcal{H}(b)$. Letting $f \equiv 1$ shows that $b$ converges non-tangentially at $\zeta$ and thus it follows that $f$ does as well for each $f \in \mathcal{H}(b)$.
      \end{proof}
\end{proposition}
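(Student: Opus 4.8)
The plan is to transport the whole question to the model of $M_z^*$ on $(aH^2)^\perp$ furnished by Theorem~\ref{thm:model}, to identify the eigenvector explicitly there, and then to push the resulting information back through the unitary $V$.

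First I would use Theorem~\ref{thm:model}(ii): $\overline{\zeta}$ is an eigenvalue of $M_z^*$ on $(aH^2)^\perp$ exactly when it is an eigenvalue of the backward shift $L$ on $J_\phi$. The equation $Lh=\overline{\zeta}h$ reads $h(z)-h(0)=\overline{\zeta}z\,h(z)$, which forces $h(z)=h(0)(1-\overline{\zeta}z)^{-1}$; hence every eigenvector is a nonzero multiple of the Cauchy kernel $k_\zeta$, and consequently $k_\zeta\in J_\phi\subset H^2/\phi$. In particular $k_\zeta\phi\in H^2$, that is, $\lvert\phi\rvert^2/\lvert 1-\overline{\zeta}z\rvert^2\in L^1(\mathbb{T})$ --- this integrability is the single quantitative fact that drives the rest of the argument.

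Next I would show that $k_\lambda\to k_\zeta$ in the norm of $H^2/\phi=L^2(\lvert\phi\rvert^2\,dm)$ as $\lambda\to\zeta$ non-tangentially. For $\lambda\in\Gamma_\gamma(\zeta)$ and $\lvert z\rvert=1$ one has the elementary chain
\begin{equation*}
  \gamma\lvert 1-\overline{\lambda}z\rvert=\gamma\lvert z-\lambda\rvert\ge\gamma(1-\lvert\lambda\rvert)>\lvert\lambda-\zeta\rvert\ge\lvert z-\zeta\rvert-\lvert z-\lambda\rvert,
\end{equation*}
which rearranges to $(1+\gamma)\lvert 1-\overline{\lambda}z\rvert\ge\lvert 1-\overline{\zeta}z\rvert$ on $\mathbb{T}$; hence $\lvert k_\lambda\rvert^2\lvert\phi\rvert^2\le(1+\gamma)^2\lvert k_\zeta\rvert^2\lvert\phi\rvert^2\in L^1(\mathbb{T})$, uniformly for $\lambda\in\Gamma_\gamma(\zeta)$, and since $k_\lambda\to k_\zeta$ pointwise a.e.\ on $\mathbb{T}$, dominated convergence yields the claimed norm convergence. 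Now take any $f\in\mathcal{H}(b)$ and set $g=V^{-1}f$. Using the identity $Vk_\lambda=(1-\overline{b(\lambda)})^{-1}k_\lambda^b$ established in the proof of Proposition~\ref{prop:unitarymap} together with the reproducing property of $k_z^b$,
\begin{equation*}
  \frac{f(z)}{1-b(z)}=\langle f,(1-\overline{b(z)})^{-1}k_z^b\rangle_b=\langle V^{-1}f,k_z\rangle_{H^2/\phi}=\langle g,k_z\rangle_{H^2/\phi},
\end{equation*}
and by the norm convergence just proved the right-hand side has the non-tangential limit $\langle g,k_\zeta\rangle_{H^2/\phi}$ at $\zeta$. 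Taking $f\equiv1$ (which lies in $\mathcal{H}(b)$ since polynomials are dense) shows $(1-b(z))^{-1}$ has a finite non-tangential limit at $\zeta$; because $1-b$ is bounded this limit is nonzero, so $b(z)$ converges non-tangentially to some $b(\zeta)\neq1$. Multiplying back, $f$ has the non-tangential limit $(1-b(\zeta))\langle g,k_\zeta\rangle_{H^2/\phi}$, which is part (i).

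For (ii) I would invoke the isometry of $V$: since $k_\lambda\to k_\zeta$ in $H^2/\phi$, the vectors $Vk_\lambda=(1-\overline{b(\lambda)})^{-1}k_\lambda^b$ converge to $Vk_\zeta$ in $\mathcal{H}(b)$, hence pointwise on $\mathbb{D}$; but pointwise on $\mathbb{D}$ the same net tends to $(1-\overline{b(\zeta)})^{-1}k_\zeta^b$ (using $b(\lambda)\to b(\zeta)$ from the previous paragraph), so $k_\zeta^b=(1-\overline{b(\zeta)})Vk_\zeta\in\mathcal{H}(b)$. Then, for $f\in\mathcal{H}(b)$, $\langle f,k_\zeta^b\rangle_b=(1-b(\zeta))\langle f,Vk_\zeta\rangle_b=(1-b(\zeta))\langle g,k_\zeta\rangle_{H^2/\phi}$, which is exactly the non-tangential limit of $f$ at $\zeta$ found above; thus $f(\zeta)=\langle f,k_\zeta^b\rangle_b$. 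The one genuinely delicate point is this limit interchange for Cauchy kernels --- making sure the non-tangential convergence $k_\lambda\to k_\zeta$ really occurs in the weighted $L^2$-norm (and not merely pointwise) and that it transfers cleanly through $V$; everything else is routine bookkeeping with the Herglotz and reproducing-kernel identities from Proposition~\ref{prop:unitarymap}.
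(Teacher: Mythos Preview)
Your proof is correct and follows essentially the same route as the paper: transfer to $J_\phi$ via Theorem~\ref{thm:model}, identify the eigenvector as $k_\zeta$, use the domination estimate $(1+\gamma)\lvert 1-\overline{\lambda}z\rvert\ge\lvert 1-\overline{\zeta}z\rvert$ to get norm convergence of $k_\lambda$ in $H^2/\phi$, and push back through $V$. You spell out part~(ii) and the dominated-convergence step more explicitly than the paper does (which stops after part~(i)), but the argument is the same.
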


\section{Proofs of the main results}\label{sec:suff}

In this section, we establish the main results stated in the introduction. We begin by giving a complete function theoretic characterization of cyclicty in the case $\dim(aH^{2})^{\perp} < \infty$ based on Proposition \ref{prop:ah2}.

\begin{proof}[Proof of Theorem \ref{thm:finite_defect}]
    Recall that we have assumed $\dim(aH^{2})^{\perp} < \infty$ and $\overline{\lambda_{1}}$, $\overline{\lambda_{2}}$, ... $\overline{\lambda_{s}}$ denotes the eigenvalues of $M_{z}^{*}$ restricted to $(aH^{2})^{\perp}$. Let $\mu = \mu_{1}$ be the Aleksandrov-Clark measure of $b$ associated to the point $1$. By a simple algebraic computation the eigenspaces of the backwards shift, $L$, on $P^{2}(\mu)$ are easily seen to be of dimension $1$. Since by Proposition \ref{prop:unitarymap} and part $(ii)$ of Theorem \ref{thm:model} the operator $M_{z}^{*}$ is unitarily equivalent to the backwards shift on $P^{2}(\mu)$ the eigenspaces of $M_{z}^{*}$ are also of dimension $1$.
    
    To see $(i)$ it suffices to note that if $\overline{\lambda} \in \mathbb{D}$ is an eigenvalue of $M_{z}^{*}$ restricted to $(aH^{2})^{\perp}$, then since the reproducing kernel $k_{\lambda}^{b}$ is an eigenvector it belongs to $(aH^{2})^{\perp}$. Thus we have
    \begin{equation*}
        0 = \langle a, k_{\lambda}^{b} \rangle_{b} = a(\lambda),    
    \end{equation*}
    contradicting that $a$ is outer. Statement $(ii)$ follows from Proposition \ref{prop:eigenvalue}. Also the kernel functions $k_{\lambda_{j}}^{b}$ belong to $\mathcal{H}(b)$ and $f(\lambda_{j}) = \langle f, k_{\lambda_{j}}^{b} \rangle$, for all $f \in \mathcal{H}(b)$. This takes care of the necessity in statement $(iii)$.

We now turn to sufficiency of $(iii)$. Suppose $f \in \mathcal{H}(b)$ is outer and $f(\lambda_{j}) \neq 0$, for all $j = 1, 2, ..., s$. Let $h \in \mathcal{H}(b)$ be arbitrary. It will suffice to show that $\langle z^{n}f, h \rangle = 0$, for all $n \geq 0$ implies $h \equiv 0$. We claim that we can assume $h \in (aH^{2})^{\perp}$, indeed decompose $h = h_{a} + \tilde{h}$, with $h_{a} \in \text{clos}(aH^{2})$ and $\tilde{h} \in (aH^{2})^{\perp}$. Proposition \ref{prop:ah2} implies that $aH^{2} \subset [f]$ and so $\langle ag, h_{a} \rangle = 0$, for all $g \in H^{2}$ and hence $h_{a} \equiv 0$. Let $n = \dim(aH^{2})^{\perp}$ and $m_{1}, m_{2}, ..., m_{s}$ be the algebraic multiplicities of the eigenvalues $\overline{\lambda}_{j}$, $j = 1, 2, ...,s$. We have $\sum_{j} m_{j} = n$. For each $1 \leq j \leq s$ and $1 \leq l \leq m_{j}$ choose and element $k_{\lambda_{j}}^{l}$ of norm $1$, such that
    \begin{equation*}
      k_{\lambda_{j}}^{l} \in \ker (\overline{\lambda}_{j} - M_{z}^{*})^{l},
    \end{equation*}
    and $k_{\lambda_{j}}^{l} \notin \ker (\overline{\lambda}_{j} - M_{z}^{*})^{k}$, for $1 \leq k < l$. This is possible by since, as noted earlier, $M_{z}^{*}$ has simple spectrum. Note that $k_{\lambda_{j}}^{1}$ is a constant multiple of the kernel element $k_{\lambda_{j}}^{b}$. Since $(aH^{2})^{\perp}$ is finite dimensional and $M_{z}^{*}$ acts on $(aH^{2})^{\perp}$ its root vectors (generalized eigenvectors) $(k_{\lambda_{j}}^{l})_{j,l}$ form a basis for $(aH^{2})^{\perp}$ (this is the Jordan normal form). Thus $h$ can be expressed
    \begin{equation*}
      h(z) = \sum_{j, l} c_{j,l}k_{\lambda_{j}}^{l}(z),
    \end{equation*}
    for some complex numbers $(c_{j,l})_{j,l}$. We will show $c_{j,l} = 0$ for all $j$ and $l$ which will complete the proof. For $1 \leq k \leq s$ let $p_{k}$ be the polynomial
    \begin{equation*}
      p_{k}(z) = (\lambda_{k} - z)^{m_{k}-1}\prod_{j \neq k}^{s} (\lambda_{j} - z)^{m_{j}}.
    \end{equation*}
    Then
    \begin{equation*}
      0 = \langle p_{k}f, h \rangle = \langle \prod_{j \neq k} (\lambda_{j} - z)^{m_{j}}f, (\overline{\lambda}_{k} - M_{z}^{*})^{m_{k}-1}c_{k,m_{k}}k_{\lambda_{k}}^{m_{k}}\rangle.
    \end{equation*}
    Since $(\overline{\lambda}_{k} - M_{z}^{*})^{m_{k}-1}k_{\lambda_{k}}^{m_{k}}$ is a nonzero element of $\ker(\overline{\lambda}_{k}-M_{z}^{*})$ it is of the form $a_{k}k_{\lambda_{k}}^{b}$, for some nonzero $a_{k}$. Thus
    \begin{equation*}
      0 = \langle \prod_{j \neq k} (\lambda_{j} - z)^{m_{j}}f, a_{k}c_{k,m_{k}}k_{\lambda_{k}}^{b}\rangle = \left( \prod_{j \neq k}(\lambda_{j} - \lambda_{k})^{m_{j}} \right)a_{k}c_{k,m_{k}}f(\lambda_{k}),
    \end{equation*}
    and so $c_{k,m_{k}} = 0$, for all $1 \leq k \leq s$. Replacing $m_{j}$ by $m_{j}-1$ (if $m_{j} > 1$) and iterating the above (finite) process shows $c_{j,l} = 0$ for all $1 \leq j \leq s$ and $1 \leq l \leq m_{j}$. Hence $h \equiv 0$.
\end{proof}

In the remainder of this section we focus on the case $\dim(aH^{2})^{\perp} = \infty$. Let $\phi \in H^{2}$ be an outer function of unit norm and consider the de Branges-Rovnyak space $\mathcal{H}(b)$ generated by $\phi$. Recall the heuristic principle stated in the introduction: an outer function $f \in \mathcal{H}(b)$ is cyclic if it is ``not too small'' on the set $\sigma(\phi)$. We turn to establishing the concrete realizations of this principle discussed in the introduction.

Recall that $N^{+}$ denotes the Smirnov class of quotients of bounded analytic functions in $\mathbb{D}$ with outer denominator. The proof of Theorem \ref{thm:thmA} is a direct consequence of the description of the invariant subspaces of $\mathcal{H}(b)$ in equation \eqref{eq:invariantsubspaces}.

\begin{proof}[Proof of Theorem \ref{thm:thmA}]
  Let $J$ be the map from Theorem \ref{thm:J_model} and $\psi$ the extremal function of $[f]$, that is
  \begin{equation*}
    [f] = \left\{ g \in \mathcal{H}(b) : \frac{g}{\psi}, \frac{g}{\psi}\psi_{1} \in H^{2} \right\},
  \end{equation*}
  where $J\psi = (\psi, \psi_{1})$. Recall that since $f$ is outer $aH^{2} \subset [f]$. Hence $ag \psi^{-1} \in H^{2}$ for all $g \in H^{2}$ which implies $a \psi^{-1} \in H^{\infty}$ and hence $\psi^{-1}\mathbbm{1}_{E} \in L^{2}$. Similarly $f \psi^{-1} \in L^{2}$ and so $\psi^{-1}\mathbbm{1}_{F} \in L^{2}$. Since $E \cup F = \mathbb{T}$ we see $\psi^{-1} \in L^{2}$ and hence since $\psi^{-1} \in N^{+}$ we have $\psi^{-1} \in H^{2}$. The same argument applies to $\psi^{-1}\psi_{1}$ and so $1 \in [f] = \mathcal{H}(b)$.
\end{proof}

Theorems \ref{thm:thmB} and \ref{thm:thmC} will require some preliminary lemmas.

\begin{lemma}\label{lemma:cauchymult}
  Let $\phi$ be an outer function in $H^{2}$, $g \in H^{2}/\phi$ and $h \in J_{\phi}$. Suppose $\langle g , L^{n}h \rangle_{\lvert \phi \rvert^{2}dm} = 0$ for all $n \geq 0$. Then for $\lvert \lambda \rvert < 1$
  \begin{equation*}
    \overline{h(1/\overline{\lambda})} \int_{\mathbb{T}} \frac{g\lvert \phi \rvert^{2}dm(\zeta)}{1 - \lambda \overline{\zeta}} = \int_{\mathbb{T}} \frac{\overline{h}g\lvert \phi \rvert^{2}dm(\zeta)}{1 - \lambda \overline{\zeta}}.
  \end{equation*}
  \begin{proof}
    Since $h \in J_{\phi}$ it has analytic psuedocontinuation to the exterior disk and it is in this sense that $h(1/\overline{\lambda})$ should be understood. For $\lvert w \rvert > 1$ we have by the assumption
    \begin{equation*}
      0 = \langle g, (I-wL)^{-1}Lh \rangle_{\lvert \phi \rvert^{2}dm} = \int_{\mathbb{T}} \overline{\left(\frac{h(\zeta)-h(w)}{\zeta-w}\right)}g\lvert \phi \rvert^{2}dm(\zeta).
    \end{equation*}
    Rearranging and using $\zeta-w = -w(1-\zeta/w)$ we have
    \begin{equation*}
      \overline{h(w)} \int_{\mathbb{T}} \frac{1}{1-\overline{\zeta}/\overline{w}}g\lvert \phi \rvert^{2}dm(\zeta)= \int_{\mathbb{T}} \frac{1}{1-\overline{\zeta}/\overline{w}}\overline{h}g\lvert \phi \rvert^{2}dm(\zeta).
    \end{equation*}
    The result now follows by setting $\lambda = 1/\overline{w}$.
  \end{proof}
\end{lemma}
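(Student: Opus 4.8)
The statement to prove is Lemma~\ref{lemma:cauchymult}. Let me sketch a proof.

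\medskip

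The plan is to unpack the orthogonality hypothesis by applying it not just to the powers $L^{n}h$ individually but to a resolvent-type generating function built from them. The key observation is that for $|w| > 1$ the point $w$ lies in the resolvent set of $L$ acting on $J_{\phi}$ — indeed, by the discussion preceding Definition~\ref{def:specf}, $\sigma(L) \subset \mathbb{T}$, so every $w$ with $|w|>1$ is in $\rho(L)$ — and hence the Neumann series $\sum_{n\ge 0} w^{-n-1} L^{n} h$ converges in $J_{\phi}$ to $(wI - L)^{-1}h$, or equivalently $(I - w^{-1}L)^{-1}(w^{-1}h)$. More convenient here is to note $(I - wL)^{-1}$ makes sense for $|w|$ small, but since we want $|w| > 1$ we instead work with $\sum_{n \ge 0} w^{-n}L^{n}h = w(wI-L)^{-1}h$. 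Using the explicit resolvent formula \eqref{eq:resolvent}, adapted to the point $w$, one gets $(I - w^{-1}L)^{-1}h(z) = \tfrac{zh(z) - w^{-1}\,w\, h(\,\cdot\,)}{\ldots}$; the cleanest route is to write directly
\[
  (I - wL)^{-1}Lh(\zeta) = \frac{h(\zeta) - h(w)}{\zeta - w},
\]
which is exactly the identity the author invokes, valid for $|w|>1$ since then $w \in \rho(L)^{-1}$ in the notation of Section~\ref{sec:spec}. Granting this, the hypothesis $\langle g, L^{n}h\rangle_{|\phi|^{2}dm} = 0$ for all $n\ge 0$ implies $\langle g, (I-wL)^{-1}Lh\rangle_{|\phi|^{2}dm} = 0$ by summing the geometric series (which converges in norm because $\|L\| \le$ something and $|w|>1$, or more carefully because the series defines the resolvent vector which lies in $J_{\phi}$).

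\medskip

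Next I would substitute the explicit kernel and separate the two terms. Writing out the inner product with respect to $|\phi|^{2}dm$,
\[
  0 = \int_{\mathbb{T}} \overline{\left(\frac{h(\zeta) - h(w)}{\zeta - w}\right)} g(\zeta)\,|\phi(\zeta)|^{2}\,dm(\zeta),
\]
so that
\[
  \overline{h(w)}\int_{\mathbb{T}} \frac{\overline{1}}{\overline{\zeta - w}}\,g\,|\phi|^{2}\,dm
  = \int_{\mathbb{T}} \frac{\overline{h(\zeta)}}{\overline{\zeta-w}}\,g\,|\phi|^{2}\,dm.
\]
Now use $\overline{\zeta - w} = \overline{\zeta} - \overline{w} = -\overline{w}(1 - \overline{\zeta}/\overline{w})$ to rewrite $1/\overline{\zeta - w} = -\,\overline{w}^{-1}(1 - \overline{\zeta}/\overline{w})^{-1}$; the common factor $-\overline{w}^{-1}$ cancels from both sides, leaving
\[
  \overline{h(w)}\int_{\mathbb{T}} \frac{g\,|\phi|^{2}\,dm(\zeta)}{1 - \overline{\zeta}/\overline{w}}
  = \int_{\mathbb{T}} \frac{\overline{h}\,g\,|\phi|^{2}\,dm(\zeta)}{1 - \overline{\zeta}/\overline{w}}.
\]
Finally, setting $\lambda = 1/\overline{w}$ (so $|\lambda| < 1$ corresponds to $|w| > 1$), this reads
\[
  \overline{h(1/\overline{\lambda})}\int_{\mathbb{T}} \frac{g\,|\phi|^{2}\,dm(\zeta)}{1 - \lambda\overline{\zeta}}
  = \int_{\mathbb{T}} \frac{\overline{h}\,g\,|\phi|^{2}\,dm(\zeta)}{1 - \lambda\overline{\zeta}},
\]
which is the claimed identity, noting that $h(w) = h(1/\overline{\lambda})$ is interpreted via the pseudocontinuation of $h \in J_{\phi}$ to $\mathbb{D}^{e}$.

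\medskip

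The one point that needs genuine care — the main obstacle — is justifying the interchange of the infinite sum $\sum_{n\ge 0} w^{-n-1}\langle g, L^{n}h\rangle$ with the inner product, i.e.\ verifying that $\langle g, (I-wL)^{-1}Lh\rangle_{|\phi|^2 dm}$ really equals the sum of the (all-zero) series. Since $g \in H^{2}/\phi = P^{2}(\mu_{1})$ and $h, L^{n}h \in J_{\phi} \subset H^{2}/\phi$, both sides live in the Hilbert space $L^{2}(|\phi|^{2}dm)$; the Neumann series $\sum_{n\ge0} w^{-n-1}L^{n}h$ converges in this norm precisely because $w$ lies in the resolvent set of $L$ (here one uses $|w|>1$ and $\sigma(L)\subset\mathbb{T}$, established in Section~\ref{sec:spec}), so continuity of $\langle g, \cdot\rangle$ gives $\langle g, (I-wL)^{-1}Lh\rangle = \sum_{n\ge 0} w^{-n-1}\langle g, L^{n}h\rangle = 0$. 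Everything else is the bookkeeping above, plus recording that $h(w)$ on the exterior disk is exactly the pseudocontinuation value guaranteed by membership in $J_{\phi}$, as spelled out by the explicit extension formulas in Section~\ref{sec:model}.
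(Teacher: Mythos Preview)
Your proof is correct and follows essentially the same route as the paper: pass from the orthogonality to $L^{n}h$ to orthogonality against the resolvent vector $(I-wL)^{-1}Lh$ for $|w|>1$, use the explicit difference-quotient form of this vector, and then change variables $\lambda = 1/\overline{w}$. The only refinement worth noting is that your justification ``because $w$ lies in the resolvent set'' should really read ``because the spectral radius of $L$ on $J_{\phi}$ is at most $1$'' (which follows from $\sigma(L)\subset\mathbb{T}$, as you cite from Section~\ref{sec:spec}); mere membership in the resolvent set does not by itself force Neumann-series convergence, but the spectral-radius bound does.
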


We denote by $L^{1,\infty}_{0}$ the class of functions $h \in L^{1,\infty}$ satisfying
\begin{equation*}
  m( \left\{ e^{i\theta} \in \mathbb{T} : \lvert h(e^{i\theta}) \rvert > t \right\} ) = o(1/t), t \to \infty.
\end{equation*}
The set $L^{1,\infty}_{0}$ is a closed subspace of $L^{1,\infty}$. We define its analytic subspace $H^{1,\infty}_{0} = N^{+} \cap L^{1,\infty}_{0}$. It is a result of Kolmogorov that for $h \in L^{1}$ the Cauchy integral $Ch$ belongs to $H^{1,\infty}_{0}$, $Ch \in H^{1,\infty}_{0}$. The next Lemma is essentially due to Aleksandrov. Indeed the it is a direct Corollary to Theorem 6 of \cite{MR627941}, see also Lemma 5.13 and Lemma 5.22 in \cite{MR3589672}.

\begin{lemma}\label{lemma:weakmorera}
  Suppose $f, \overline{f} \in H^{1,\infty}_{0}$. Then $f$ is constant.
\end{lemma}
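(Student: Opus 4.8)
The statement to prove is Lemma \ref{lemma:weakmorera}: if $f, \overline{f} \in H^{1,\infty}_0$, then $f$ is constant. The plan is to reduce this to a known rigidity statement about functions in $L^{1,\infty}_0$ and analyticity.

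First I would recall that $H^{1,\infty}_0 = N^+ \cap L^{1,\infty}_0$, so $f \in N^+$ and $\overline{f} \in N^+$. A function whose boundary values and complex conjugate boundary values both lie in the Smirnov class must be constant — this is the classical fact that $N^+ \cap \overline{N^+}$ consists only of constants (if $f \in N^+$ and $\overline{f} \in N^+$, then writing $f = g/h$ and $\overline{f} = u/v$ with $g,h,u,v \in H^\infty$, $h,v$ outer, one gets $g v = \overline{u} \, \overline{h}$ on $\mathbb{T}$; the left side is in $H^1$ and the right in $\overline{H^1}$, forcing both to be a constant, and since $h,v$ are outer $f$ is that constant). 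However this elementary argument uses only membership in $N^+$, not the weak-$L^1$ decay, so strictly speaking the $L^{1,\infty}_0$ hypothesis is not even needed once one knows $f \in N^+$; the cited results of Aleksandrov (Theorem 6 of \cite{MR627941}, and Lemmas 5.13, 5.22 of \cite{MR3589672}) are presumably invoked precisely to avoid assuming $f \in N^+$ a priori and instead to deduce rigidity directly from $f, \overline{f} \in L^{1,\infty}_0$ together with whatever analyticity is implicit. I would therefore follow the cited route: invoke Aleksandrov's theorem that a function $f$ with $f \in H^{1,\infty}_0$ and $\overline{f} \in H^{1,\infty}_0$ (equivalently $f \in L^{1,\infty}_0$ with Fourier spectrum in the nonnegative integers in a suitable weak sense, and $\overline f$ likewise) is constant, which is exactly the content of the referenced Lemma 5.13/5.22.

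Concretely, the key steps are: (1) observe $f \in N^+$ and $\overline f \in N^+$ by definition of $H^{1,\infty}_0$; (2) apply the $N^+ \cap \overline{N^+} = \mathbb{C}$ principle, or equivalently cite Aleksandrov's rigidity theorem as stated in \cite{MR627941} and \cite{MR3589672}; (3) conclude $f$ is constant. If one wants a self-contained argument rather than a citation, step (2) expands as: from $f, \overline f \in N^+$ with the same modulus-integrability, the outer parts cancel and the inner parts must be trivial, so $f$ has no zeros, no singular inner factor, and neither does $\overline f$; combined with $f \in L^{1,\infty}_0 \subset$ (functions of bounded characteristic with controlled growth) this pins $f$ to a constant.

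The main obstacle is purely expository: deciding whether to present the short $N^+$-based proof or to faithfully route through the Aleksandrov references (which handle the genuinely weaker hypothesis that one only knows the relevant Fourier/Cauchy-type structure in the $L^{1,\infty}_0$ setting, not outright Smirnov membership). Given that the paper has already set up $H^{1,\infty}_0 = N^+ \cap L^{1,\infty}_0$ explicitly, the cleanest proof is the one-line reduction: $f, \overline f \in N^+ \Rightarrow f$ constant, with the weak-$L^1$ hypothesis being harmless extra baggage. I would write it that way, citing \cite{MR627941} and \cite{MR3589672} for the general principle so that readers who prefer the weak-type formulation have a reference.

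\begin{proof}
Since $H^{1,\infty}_{0} = N^{+} \cap L^{1,\infty}_{0}$, the hypotheses say that both $f$ and $\overline{f}$ belong to the Smirnov class $N^{+}$. A function that lies in $N^{+}$ together with its complex conjugate must be constant: writing $f = g_{1}/h_{1}$ and $\overline{f} = g_{2}/h_{2}$ with $g_{i}, h_{i} \in H^{\infty}$ and $h_{i}$ outer, we obtain $g_{1} h_{2} = \overline{g_{2} h_{1}}$ a.e. on $\mathbb{T}$, so the left-hand side lies in $H^{1} \cap \overline{H^{1}}$ and is therefore a constant $c$; since $h_{1}, h_{2}$ are outer this forces $f$ to equal a constant. (Equivalently, this is the rigidity statement contained in Theorem 6 of \cite{MR627941}; see also Lemma 5.13 and Lemma 5.22 in \cite{MR3589672}.) Hence $f$ is constant.
\end{proof}
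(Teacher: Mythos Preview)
Your argument has a genuine gap: the claim that $N^{+}\cap\overline{N^{+}}$ consists only of constants is false, so the $L^{1,\infty}_{0}$ hypothesis is \emph{not} ``harmless extra baggage'' but essential. A concrete counterexample is the Cayley transform $f(z)=i(1+z)/(1-z)$. It lies in $N^{+}$ (a quotient of bounded analytic functions with outer denominator $1-z$), and on $\mathbb{T}$ it is real-valued, so $\overline{f}=f\in N^{+}$ as boundary functions; yet $f$ is certainly not constant. Note that this $f$ satisfies $m(\{\lvert f\rvert>t\})\sim c/t$ as $t\to\infty$, so $f\in L^{1,\infty}$ but $f\notin L^{1,\infty}_{0}$: the little-$o$ decay is exactly what excludes such examples, and this is the real content of Aleksandrov's theorem.

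Your algebraic manipulation is also incorrect. From $f=g_{1}/h_{1}$ and $\overline{f}=g_{2}/h_{2}$ on $\mathbb{T}$ one obtains $g_{1}\overline{h_{2}}=h_{1}\overline{g_{2}}$, not $g_{1}h_{2}=\overline{g_{2}h_{1}}$; neither side of the correct identity lies in $H^{1}$ or in $\overline{H^{1}}$ in general, so the $H^{1}\cap\overline{H^{1}}=\mathbb{C}$ trick does not apply. The paper itself does not attempt a self-contained proof here; it simply records the lemma as a direct corollary of Aleksandrov's result and the cited lemmas in \cite{MR3589672}, whose substance is precisely the passage from the weak-type decay $o(1/t)$ to rigidity.
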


With this in hand, we are ready for the proof of Theorem \ref{thm:thmB}.

\begin{proof}[Proof of Theorem \ref{thm:thmB}]
    Recall that $f \in \mathcal{H}(b)$ is an outer function, such that for each point $\zeta \in \sigma(\phi)$ there exists an open arc $\zeta \in I_{\zeta} \subset \mathbb{T}$ and a number $\eta_{\zeta} > 0$, such that $\lvert f \rvert > \eta_{\zeta}$ a.e. on $I_{\zeta}$. Let $V=V_{1}: P^{2}(\mu_{1}) \to \mathcal{H}(b)$ be the unitary map from Proposition \ref{prop:unitarymap} and recall that we can without loss of generality assume that $\phi = a/(1-b)$ is of unit norm and hence $P^{2}(\mu_{1}) = H^{2}/\phi$. Suppose $\langle z^{n}f, Vh \rangle_{b} = 0$ for all $n \geq 0$ and some $h \in H^{2}/\phi$. It will be sufficient to show $h \equiv 0$. Since $\text{clos}(aH^{2}) \subset [f]$ we can without loss of generality restrict to the case $Vh \in (aH^{2})^{\perp}$ or equivalently $h \in J_{\phi}$. Let $g \in H^{2}/\phi$ the unique function satisfying $Vg = f$. By Lemma \ref{lemma:cauchymult} we have for $\lvert \lambda \rvert < 1$
    \begin{equation*}
      \overline{h(1/\overline{\lambda})}f(\lambda) = (1-b(\lambda))\int_{\mathbb{T}} \frac{\overline{h}g\lvert\phi\rvert^{2}}{1-\lambda\overline{\zeta}}dm(\zeta).
    \end{equation*}
    Since $\overline{h}g\lvert\phi\rvert^{2} \in L^{1}$ we have $\overline{h}f \in L^{1, \infty}_{0}$. The set $\sigma(\phi)$ is compact, hence there exists a number $\eta > 0$ and a finite collection of $I_{\zeta_{j}}$, $j = 1, 2, ..., N$ such that $\sigma(\phi) \subset \cup_{j=1}^{N} I_{\zeta_{j}} = I$, $\lvert f \rvert > \eta > 0$ a.e. on $I$ and $h$ is analytic in a neighborhood of $\mathbb{T} \setminus I$. From this we see $\overline{h} \in L^{1, \infty}_{0}$. Recall that $J_{\phi} \subset N^{+}$ and so $h, \overline{h} \in N^{+}$. Hence $h, \overline{h} \in H^{1, \infty}_{0}$. Since $h(\infty) = 0$ we have $h \equiv 0$ by Lemma \ref{lemma:weakmorera}.
  \end{proof}

With the above tools in hand, we can prove the following necessity theorem for annihilators of the *-cyclic subspace generated by $h \in J_{\phi}$.

\begin{proposition}
  Let $\mathcal{H}(b)$ be a non-extreme de Branges-Rovnyak space, with $b(0) = 0$. We can without loss of generality assume that $\phi = a/(1-b)$ is of unit norm. Let $g \in H^{\infty}$ and write $f = Vg$, where $V$ is the unitary map from Proposition \ref{prop:unitarymap}. Denote by $\theta$ the inner factor of $f$ and set $F = f/\theta$. Suppose $h \in J_{\phi} \setminus \left\{ 0 \right\}$ and $g$ annihilates the *-cyclic subspace generated by $h$, that is $\langle g, L^{n}h \rangle_{J_{\phi}} = 0$, for all $n \geq 0$. Then $h$ has a singularity at $\sigma(F)$.
  \begin{proof}
    From Lemma \ref{lemma:cauchymult} we have for $\lvert \lambda \rvert < 1$
    \begin{equation*}
      \overline{h(1/\overline{\lambda})}f(\lambda) = (1-b(\lambda))\int_{\mathbb{T}} \frac{\overline{h(\zeta)}g(\zeta)\lvert \phi \rvert^{2}dm(\zeta)}{1-\lambda \overline{\zeta}}.
    \end{equation*}
    Since $g$ is bounded $\overline{h}g \in H^{2}/\phi$ and hence $\overline{h}f \in H^{2}$. Since $\theta$ is the inner factor of $f$ and $\overline{h}(0) = 0$ we have $\overline{h}F = \overline{h}f/\theta \in H^{2}_{0}$. Since $hF \in N^{+}$ is also square summable on $\mathbb{T}$ we have
    \begin{equation*}
      h \in \frac{H^{2}}{F} \cap \frac{\overline{H^{2}_{0}}}{\overline{F}} = J_{F}.
    \end{equation*}
    Thus $h$ has a singularity at $\sigma(F)$.
  \end{proof}
\end{proposition}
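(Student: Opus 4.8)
The plan is to show that $h$ lies in the space $J_{F} = \frac{H^{2}}{F} \cap \frac{\overline{H^{2}_{0}}}{\overline{F}}$ attached to the outer function $F$; granting this, the conclusion is immediate. Indeed, as an element of $J_{F}$ the function $h$ is analytic in $\mathbb{C}_{\infty} \setminus \mathbb{T}$ and, by the description of $\sigma(F)$ recorded in Section \ref{sec:spec}, extends analytically across every point of $\mathbb{T} \setminus \sigma(F)$. If it also extended across all of $\sigma(F)$ it would be analytic on the whole sphere, hence constant; but constants in $J_{F}$ vanish at $\infty$ (as one sees from the exterior-disk representation formula, since the zeroth Fourier coefficient of $\overline{F}h \in \overline{H^{2}_{0}}$ is zero), so $h \equiv 0$, contradicting the hypothesis $h \neq 0$. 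Thus the singular set of $h$ is a nonempty subset of $\sigma(F)$, which is the assertion.

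To obtain $h \in J_{F}$ I would first unwind the annihilation hypothesis through Lemma \ref{lemma:cauchymult}. Since $g \in H^{\infty} \subset H^{2}/\phi$ and $\langle g, L^{n}h \rangle_{\lvert \phi \rvert^{2}dm} = 0$ for all $n \geq 0$, that lemma gives, for $\lvert \lambda \rvert < 1$,
\begin{equation*}
  \overline{h(1/\overline{\lambda})}\int_{\mathbb{T}} \frac{g\lvert \phi \rvert^{2}dm(\zeta)}{1-\lambda\overline{\zeta}} = \int_{\mathbb{T}} \frac{\overline{h}g\lvert \phi \rvert^{2}dm(\zeta)}{1-\lambda\overline{\zeta}}.
\end{equation*}
Writing $d\mu = \lvert \phi \rvert^{2}dm$ and recalling $f = Vg = (1-b)C_{\mu}g$, the left-hand Cauchy transform equals $f(\lambda)/(1-b(\lambda))$, so
\begin{equation*}
  \overline{h(1/\overline{\lambda})}f(\lambda) = (1-b(\lambda))C_{\mu}(\overline{h}g)(\lambda) = V(\overline{h}g)(\lambda), \qquad \lambda \in \mathbb{D}.
\end{equation*}
Here boundedness of $g$ is used: since $h \in \overline{H^{2}_{0}}/\overline{\phi}$ we have $u := \phi\overline{h} \in H^{2}_{0}$, whence $\phi\overline{h}g \in H^{2}_{0}$ and $\overline{h}g \in H^{2}/\phi = P^{2}(\mu)$. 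Therefore $V(\overline{h}g) \in \mathcal{H}(b) \subset H^{2}$, and passing to non-tangential boundary limits in the displayed identity — using the pseudocontinuation of $h$ to identify $\overline{h(1/\overline{\lambda})}$ with $\overline{h}$ on $\mathbb{T}$ — shows that $\overline{h}f$ is the boundary function of the $H^{2}$ function $V(\overline{h}g)$; in particular $\overline{h}f \in H^{2}$.

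It then remains to divide off the inner factor $\theta$ of $f$ and read off membership in $J_{F}$. Write $f = \theta F$ with $F$ outer, and let $\widetilde{h} = u/\phi \in N^{+}$ (legitimate since $u = \phi\overline{h} \in H^{2}_{0}$ and $\phi$ is outer), so that $\widetilde{h}\vert_{\mathbb{T}} = \overline{h}$ and $\widetilde{h}(0) = 0$. Then $\widetilde{h}F \in N^{+}$ and on $\mathbb{T}$ we have $\lvert \widetilde{h}F \rvert = \lvert h \rvert\lvert f \rvert = \lvert \overline{h}f \rvert \in L^{2}$; by the Smirnov maximum principle $\widetilde{h}F \in H^{2}$, and $\widetilde{h}(0) = 0$ forces $\widetilde{h}F \in H^{2}_{0}$, so that $h\overline{F} \in \overline{H^{2}_{0}}$, i.e. $h \in \overline{H^{2}_{0}}/\overline{F}$. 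In the same way $hF \in N^{+}$ with $\lvert hF \rvert = \lvert h \rvert\lvert f \rvert \in L^{2}$, hence $hF \in H^{2}$ and $h \in H^{2}/F$. Thus $h \in J_{F}$, and the proof closes by the argument of the first paragraph.

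The step I expect to be the main obstacle is this division by $\theta$: a priori $\overline{h}f$ is only known to be some $H^{2}$ function, and one must see that the inner function $\theta$ divides it in $H^{2}$, equivalently that $\overline{h}F \in H^{2}$. The remedy is never to divide an $H^{2}$ function by an inner function directly, but instead to exhibit the relevant boundary products ($\widetilde{h}F$ and $hF$) as Smirnov-class functions with square-integrable modulus and invoke $N^{+} \cap L^{2} = H^{2}$. A secondary point requiring care is the non-tangential boundary identification $\overline{h(1/\overline{\lambda})} \to \overline{h(\zeta)}$, which rests on $h \in J_{\phi}$ pseudocontinuing across $\mathbb{T}$ with matching non-tangential limits from $\mathbb{D}$ and $\mathbb{D}^{e}$.
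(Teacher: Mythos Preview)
Your proof is correct and follows essentially the same route as the paper's: apply Lemma \ref{lemma:cauchymult} together with the boundedness of $g$ to obtain $\overline{h}f \in H^{2}$, then use the Smirnov maximum principle to conclude $hF \in H^{2}$ and $\overline{h}F \in H^{2}_{0}$, i.e.\ $h \in J_{F}$. The paper's argument is simply a terser version of yours; in particular your careful handling of the division by $\theta$ via $N^{+} \cap L^{2} = H^{2}$ is exactly what the paper's one-line step ``$\overline{h}F = \overline{h}f/\theta \in H^{2}_{0}$'' is implicitly invoking.
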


Theorem \ref{thm:thmC} follows immediately from the above result.

\section{Examples}\label{sec:ex}

In this section, we apply the main results to give examples of cyclic vectors in certain $\mathcal{H}(b)$ spaces. We begin with a result valid for all non-extreme de Branges-Rovnyak spaces.

\begin{proposition}
    Let $b$ be a non-extreme function in the unit ball of $H^{\infty}$ and $\mathcal{H}(b)$ the corresponding de Branges-Rovnyak space. Then
    \begin{enumerate}[(i)]
        \item For each $\lambda \in \mathbb{D}$ the kernel $k_{\lambda}^{b}(z) = (1-\overline{b(\lambda)}b(z))/(1-\overline{\lambda}z)$ is a cyclic vector.
        \item The function $b$ is cyclic if and only if it is outer.
    \end{enumerate}
    \begin{proof}
        Part $(i)$ can be seen by appealing to Theorem \ref{thm:thmC}. Indeed the Cauchy kernel $k_{\lambda}$ belongs to $H^{\infty}$ and $k_{\lambda}^{b} = (1-\overline{b(\lambda)})Vk_{\lambda}$. Since $\lvert k_{\lambda}^{b}(\zeta) \rvert \geq 2^{-1}\lvert 1 - \lvert b(\lambda) \rvert > 0$ for almost every $\zeta \in \mathbb{T}$ we have $J_{k_{\lambda}^{b}} = \varnothing$, by Proposition \ref{prop:spec_phi}.
        
        To see part $(ii)$ it suffices to notice that the identity $\lvert a \rvert^{2} + \lvert b \rvert^{2} = 1$ valid a.e. on $\mathbb{T}$ prevents $a$ and $b$ from being small simultaneously (up to a set of measure $0$) and hence by Theorem \ref{thm:thmA} the function $b$ is a cyclic vector if it is outer.
    \end{proof}
\end{proposition}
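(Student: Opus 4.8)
The plan is to deduce both statements from the sufficiency theorems already established, Theorem \ref{thm:thmA} and Theorem \ref{thm:thmC}, together with the elementary fact noted in Section \ref{sec:ah2} that every cyclic vector of $\mathcal{H}(b)$ is cyclic for $H^{2}$ and hence outer. The common theme is that both candidates---the reproducing kernels $k_{\lambda}^{b}$ and the function $b$ itself---satisfy the ``smallness splitting'' hypothesis of Theorem \ref{thm:thmA} for essentially trivial reasons, so almost all of the work has already been absorbed into that theorem.

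For part $(i)$ I would first record that $k_{\lambda}^{b}$ is an outer function: the factor $1/(1-\overline{\lambda}z)$ is a bounded outer function, its reciprocal $1-\overline{\lambda}z$ being a polynomial with no zeros in $\overline{\mathbb{D}}$, while the factor $1-\overline{b(\lambda)}b$ has strictly positive real part on $\mathbb{D}$ (since $|b(\lambda)|\,|b(z)|<1$ there) and is therefore outer. Next, on $\mathbb{T}$ one has $|1-\overline{b(\lambda)}b(\zeta)|\ge 1-|b(\lambda)|>0$ and $|1-\overline{\lambda}\zeta|\le 1+|\lambda|<2$, so $|k_{\lambda}^{b}|\ge\tfrac12\bigl(1-|b(\lambda)|\bigr)$ a.e.\ on $\mathbb{T}$; in particular $(k_{\lambda}^{b})^{-1}\in L^{\infty}$. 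Theorem \ref{thm:thmA} applied to $f=k_{\lambda}^{b}$ with $F=\mathbb{T}$ and $E=\varnothing$ then gives cyclicity at once. (Alternatively, writing $k_{\lambda}^{b}=(1-\overline{b(\lambda)})Vk_{\lambda}$ with $k_{\lambda}$ the Cauchy kernel, one may invoke Theorem \ref{thm:thmC}: the inner factor of $k_{\lambda}^{b}$ is trivial, and the boundedness below just noted together with Proposition \ref{prop:spec_phi}$(i)$ applied on all of $\mathbb{T}$ shows $\sigma(k_{\lambda}^{b})=\varnothing$, whence $\sigma(k_{\lambda}^{b})\cap\sigma(\phi)=\varnothing$.)

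For part $(ii)$ the forward implication is immediate: $b\in\mathcal{H}(b)$, and cyclic vectors of $\mathcal{H}(b)$ are outer. For the converse, assume $b$ is outer and apply Theorem \ref{thm:thmA} with $f=b$. The identity $|a|^{2}+|b|^{2}=1$ a.e.\ on $\mathbb{T}$ forbids $a$ and $b$ from being simultaneously small: fixing any $c\in(0,1)$ and putting $E=\{\,\zeta\in\mathbb{T}:|b(\zeta)|\le c\,\}$ and $F=\{\,\zeta\in\mathbb{T}:|b(\zeta)|\ge c\,\}$, one has $E\cup F=\mathbb{T}$; on $E$ we get $|a|\ge\sqrt{1-c^{2}}$ so $a^{-1}\mathbbm{1}_{E}\in L^{\infty}\subset L^{2}$, and on $F$ we get $b^{-1}\mathbbm{1}_{F}\in L^{\infty}$. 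Theorem \ref{thm:thmA} then yields that $b$ is cyclic. Neither step presents a genuine obstacle; the only points requiring care are the verification that $k_{\lambda}^{b}$ is outer and the routine bookkeeping needed to confirm that the hypotheses of Theorem \ref{thm:thmA} (or \ref{thm:thmC}) are met---the substantive content having been packaged into those theorems.
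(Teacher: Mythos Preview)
Your proposal is correct and aligns closely with the paper's proof. For part $(ii)$ your argument is identical to the paper's. For part $(i)$ the paper uses only the Theorem~\ref{thm:thmC} route (your parenthetical alternative), whereas your primary argument via Theorem~\ref{thm:thmA} with $F=\mathbb{T}$, $E=\varnothing$ is a slightly more direct variant that avoids any discussion of $\sigma(\phi)$; both rest on the same lower bound $\lvert k_{\lambda}^{b}\rvert\ge\tfrac12(1-\lvert b(\lambda)\rvert)$ a.e.\ on $\mathbb{T}$, and you additionally make explicit the outerness of $k_{\lambda}^{b}$, which the paper leaves implicit.
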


    Theorem \ref{thm:thmB} is useful for studying cyclic vectors in $\mathcal{H}(b)$ spaces, where $\phi = a/(1-b)$ can be written as $\phi = F\phi_{1}$ and $F^{2}$ is, after a possible normalization, an exposed point of the unit ball of $H^{1}$. In this case one can often discard $F$ as a "trivial factor". We give an example where the function $\phi_{1}$ has only one point of local non-exposure.

    We say that a function a measurable function $f : \mathbb{T} \to \mathbb{C}$ is separated from $0$ at a point $\zeta \in \mathbb{T}$ if there exists a constant $\epsilon$ and an open arc $\zeta \in I \subset \mathbb{T}$, such that $\lvert f \rvert > \epsilon$ a.e. on $I$.
    
\begin{proposition}
    Let $\phi = F \phi_{1} \in H^{2}$ be an outer function of unit norm and $\mathcal{H}(b)$ be the de Branges-Rovnyak space generated by $\phi$. Let, in addition, $\phi_{1}, F \in H^{2}$ be outer functions and suppose $F^{2}/\| F^{2} \|_{1}$ is an exposed point of the unit ball of $H^{1}$. Suppose that for each open arc $I \subset \mathbb{T}$ containing $1$ there exists a constant $\epsilon = \epsilon(I) > 0$, such that $\lvert \phi_{1} \rvert > \epsilon$ a.e. on $\mathbb{T} \setminus I$. Then $\sigma(\phi) \subset \left\{ 1 \right\}$. Moreover, if $f \in \mathcal{H}(b)$ is an outer function and $f$ is separated from $0$ at the point $1$, then $f$ is a cyclic vector in $\mathcal{H}(b)$.
    \begin{proof}
        Let $h \in J_{\phi} \setminus \left\{ 0 \right\}$ and suppose for a contradiction that $h$ is analytic across some open arc $I$ containing $1$, then
        \begin{equation*}
            Fh = \mathbbm{1}_{\mathbb{T} \setminus I} (F\phi_{1}h)\phi_{1}^{-1} + \mathbbm{1}_{I} Fh.
        \end{equation*}
        Since $\phi_{1}^{-1} \in L^{\infty}(\mathbb{T} \setminus I)$ and $h$ is analytic across $I$ we have $Fh \in L^{2}$. Since also $h\overline{F} = \overline{F\phi_{1}}h\overline{\phi_{1}}^{-1} = \overline{\phi}h\overline{\phi_{1}}^{-1} \in \overline{N^{+}}$ it follows that
        \begin{equation*}
            h \in \frac{H^{2}}{F} \cap \frac{\overline{H^{2}}}{\overline{F}}.
        \end{equation*}
        Since $F^{2}$ is an exposed point of $H^{1}$ this implies $h \equiv 0$ giving a contradiction. In particular, each function in $J_{\phi} \setminus \left\{ 0 \right\}$ must have a singularity at the point $1$. We show that this implies $\sigma(\phi) \subset \left\{ 1 \right\}$. Suppose for a contradiction $\zeta \neq 1$ and $\zeta \in \sigma(\phi)$. By Theorem 3 in \cite{MR1734331} there exists a function $h \in J_{\phi}$, such that all of its singularities are contained inside an open arc $\zeta \in U \subset \mathbb{T}$ and $1 \notin U$ contradicting that $h$ must have a singularity at $1$. The second part of the Proposition follows from Theorem \ref{thm:thmB} and the fact that $\sigma(\phi) \subset \left\{ 1 \right\}$.
    \end{proof}
\end{proposition}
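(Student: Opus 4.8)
The plan is to prove the two assertions in turn; the heart of the matter is the inclusion $\sigma(\phi)\subset\{1\}$, after which the cyclicity statement is an immediate application of Theorem~\ref{thm:thmB}. The first step I would carry out is to show that \emph{every} nonzero $h\in J_{\phi}$ has a singularity at the point $1$. Assume the contrary: some $h\in J_{\phi}\setminus\{0\}$ is analytic across an open arc $I$ containing $1$ (shrinking $I$ to a slightly smaller closed sub-arc if necessary, we may assume $h$ is bounded on it). Writing $\phi=F\phi_{1}$ and $h=u/\phi$ with $u\in H^{2}$, we have $Fh=u/\phi_{1}\in N^{+}$ since $\phi_{1}$ is outer. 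At the same time the splitting $Fh=\mathbbm{1}_{\mathbb{T}\setminus I}\,(\phi h)\,\phi_{1}^{-1}+\mathbbm{1}_{I}\,Fh$ exhibits $Fh$ as an $L^{2}$ function: the first summand lies in $L^{2}$ because $\phi h\in H^{2}$ and $\phi_{1}^{-1}$ is bounded a.e.\ on $\mathbb{T}\setminus I$ by hypothesis, the second because $h$ is bounded on $I$ while $F\in H^{2}$. By the Smirnov maximum principle $Fh\in L^{2}\cap N^{+}=H^{2}$, that is, $h\in H^{2}/F$.

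Running the same computation on the co-analytic side, from $\overline{\phi}h\in\overline{H^{2}_{0}}$, say $\overline{\phi}h=\overline{w}$ with $w\in H^{2}_{0}$, we get $\overline{F}h=\overline{\phi}h/\overline{\phi_{1}}=\overline{w/\phi_{1}}$; here $w/\phi_{1}\in N^{+}$ vanishes at the origin and lies in $L^{2}$ (it has the same modulus as $Fh$), so $w/\phi_{1}\in H^{2}_{0}$ and $\overline{F}h\in\overline{H^{2}_{0}}$. Combining, $h\in (H^{2}/F)\cap(\overline{H^{2}_{0}}/\overline{F})=J_{F}$. But $F^{2}/\|F^{2}\|_{1}$ being an exposed point of the unit ball of $H^{1}$ is equivalent to $\ker T_{\overline{F}/F}=\{0\}$, hence to $J_{F}=\{0\}$; this contradicts $h\ne 0$ and proves the claim.

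To deduce $\sigma(\phi)\subset\{1\}$, suppose $\zeta\in\sigma(\phi)$ with $\zeta\ne 1$. By the localisation result cited as Theorem~3 in \cite{MR1734331}, having a point of local non-exposure at $\zeta$ produces a nonzero $h\in J_{\phi}$ all of whose singularities lie in a small open arc $U\ni\zeta$ with $1\notin U$. Such an $h$ is analytic across a neighbourhood of $1$, contradicting the previous step; hence $\sigma(\phi)\subset\{1\}$.

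Finally, let $f\in\mathcal{H}(b)$ be outer and separated from $0$ at the point $1$, so that there is an open arc $I_{1}\ni 1$ and $\eta_{1}>0$ with $|f|>\eta_{1}$ a.e.\ on $I_{1}$. Since $\sigma(\phi)\subset\{1\}$, this provides the required lower bound on $|f|$ at every point of $\sigma(\phi)$ (the hypothesis being vacuous when $\sigma(\phi)=\varnothing$), so Theorem~\ref{thm:thmB} applies and $f$ is cyclic. I expect the genuine obstacle to be the first two paragraphs: keeping the analytic and co-analytic halves of the pseudocontinuation clean enough to place $h$ in $J_{F}$, together with the reliance on the external localisation theorem for $J_{\phi}$ in passing from ``singular at $1$'' to $\sigma(\phi)\subset\{1\}$; granted that input, the rest is routine.
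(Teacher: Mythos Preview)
Your proposal is correct and follows essentially the same route as the paper: you argue by contradiction that any nonzero $h\in J_{\phi}$ must be singular at $1$ by placing it in $J_{F}$, invoke the localisation theorem of \cite{MR1734331} to conclude $\sigma(\phi)\subset\{1\}$, and then apply Theorem~\ref{thm:thmB}. Your version is slightly more explicit in places (invoking the Smirnov maximum principle by name, and carefully arranging that $\overline{F}h\in\overline{H^{2}_{0}}$ rather than merely $\overline{H^{2}}$), but the argument is the same.
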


Let $A = C(\mathbb{T}) \cap H^{\infty}$ be the disk algebra. For $f \in A$ we let $Z(f) = \left\{ \zeta \in \mathbb{D} \cup \mathbb{T} : f(\zeta) = 0 \right\}$ denote its zero set.

\begin{proposition}
    Let $f \in \mathcal{H}(b) \cap A$ be an outer function. If $Z(f) \cap \sigma(\phi) = \varnothing$, then $f$ is cyclic.
    \begin{proof}
        By Theorem \ref{thm:thmB} it suffices to show that $f$ is separated from zero in a neighborhood of each point $\zeta \in \sigma(\phi)$. By assumption $f$ is continuous and non-zero on $\sigma(\phi)$ and hence it follows immediately.
    \end{proof}
\end{proposition}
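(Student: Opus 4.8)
The plan is to deduce the statement directly from Theorem \ref{thm:thmB}. That theorem requires $f$ to be outer (which is part of the hypothesis here) together with the condition that for each $\zeta \in \sigma(\phi)$ there be an open arc $\zeta \in I_{\zeta} \subset \mathbb{T}$ and a constant $\eta_{\zeta} > 0$ with $\lvert f \rvert > \eta_{\zeta}$ a.e.\ on $I_{\zeta}$. So the entire content of the proof is to verify this separation-from-zero condition, and this will come out of a soft compactness argument once one locates the zeros of $f$.

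First I would observe that, since $f$ is outer, it has no zeros in $\mathbb{D}$, so $Z(f) \subset \mathbb{T}$; and since $f \in A$ its boundary function is continuous, whence $Z(f)$ is a closed, hence compact, subset of $\mathbb{T}$ (note that $Z(f)$ need not be empty even for outer $f$, as $f(z) = 1-z$ shows). Recall also that by Definition \ref{def:specf} the set $\sigma(\phi) = \overline{\sigma(L)}$ is a closed, hence compact, subset of $\mathbb{T}$. By hypothesis these two compacta are disjoint, so $\delta := \operatorname{dist}(Z(f), \sigma(\phi)) > 0$ (read as $\delta = 2$, say, if $Z(f) = \varnothing$). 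Let $K \subset \mathbb{T}$ be the closed $\tfrac{\delta}{2}$-neighborhood of $\sigma(\phi)$. Then $K$ is compact, it is contained in $\mathbb{T} \setminus Z(f)$ by the triangle inequality, and $f$ is continuous and nonvanishing on it, so $\eta := \min_{\zeta \in K} \lvert f(\zeta) \rvert > 0$.

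Finally, for each $\zeta \in \sigma(\phi)$, since $K$ is a neighborhood of $\sigma(\phi)$ the point $\zeta$ lies in the relative interior of $K$, so I may choose an open arc $\zeta \in I_{\zeta} \subset K$; on it $\lvert f \rvert \geq \eta$, so the hypothesis of Theorem \ref{thm:thmB} is met with the uniform constant $\eta_{\zeta} = \eta$. Under the standing normalization $b(0) = 0$ and the harmless assumption that $\phi = a/(1-b)$ is of unit norm, Theorem \ref{thm:thmB} then yields that $f$ is cyclic. I do not expect any genuine obstacle here: the only points requiring a moment's care are that $\sigma(\phi)$ is genuinely compact (immediate from its definition as a closed subset of $\mathbb{T}$) and that the normalizations invoked by Theorem \ref{thm:thmB} cost no generality, both of which are routine.
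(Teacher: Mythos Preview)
Your proof is correct and follows the same approach as the paper: both reduce immediately to Theorem~\ref{thm:thmB} by checking that $f$ is separated from zero near each point of $\sigma(\phi)$, which follows from continuity of $f$ and the hypothesis $Z(f)\cap\sigma(\phi)=\varnothing$. Your version spells out a global compactness argument to produce a uniform $\eta$, whereas a purely local argument (for each $\zeta\in\sigma(\phi)$, continuity and $f(\zeta)\neq 0$ give an arc on which $|f|>|f(\zeta)|/2$) already suffices; either way is fine.
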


Let us now give an application of our results to generalized Dirichlet spaces. Let $\mu$ be a positive finitely supported measure on $\mathbb{T}$. The Dirichlet space $\mathcal{D}(\mu)$ associated with $\mu$ is the set of $f \in H^{2}$ that have non-tangential limits $\mu$-a.e. and such that the generalized Dirichlet integral
\begin{equation*}
  \mathcal{D}_{\mu}(f) = \int_{\mathbb{T}} \int_{\mathbb{T}} \left\lvert \frac{f(z) - f(w)}{z-w} \right\rvert d\mu(w)dm(z),
\end{equation*}
is finite. The norm on $\mathcal{D}(\mu)$ is given by $\| f\|^{2} = D_{\mu}(f) + \| f \|_{2}^{2}$. In \cite{MR3110499} it was shown that $\mathcal{D}(\mu) = \mathcal{H}(b)$ with equivalent norms for some polynomials $b$ and $a$ where $a$ has a simple zero at each point in the support of $\mu$ and no other zeros. Combining this with Theorem \ref{thm:finite_defect} result yields the following Proposition which was obtained directly in \cite{MR3000683}.

\begin{proposition}
  Let $\mu$ be a finitely supported measure on $\mathbb{T}$ and $\mathcal{D}(\mu)$ the associated Dirichlet space. Denote by $\lambda_{1}$, $\lambda_{2}$, ..., $\lambda_{s}$ the support of $\mu$. Then $f \in \mathcal{D}(\mu)$ is cyclic if and only if $f$ is outer and $f(\lambda_{j}) \neq 0$, for all $j = 1, 2, ..., s$.
\end{proposition}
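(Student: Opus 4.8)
The plan is to deduce the statement from Theorem~\ref{thm:finite_defect} by way of the identification $\mathcal{D}(\mu) = \mathcal{H}(b)$ of \cite{MR3110499}. First I would record the soft point that the two properties in the conclusion are insensitive to passing to an equivalent norm: ``$f$ is cyclic'' asserts that the closed linear span of $\{z^{n}f : n\ge 0\}$ is the whole space, a statement about the topology alone, while ``$f$ is outer'' is a purely function-theoretic condition. Hence it suffices to characterise the cyclic vectors of $\mathcal{H}(b)$, where $a,b$ are the polynomials furnished by \cite{MR3110499}, $a$ having a simple zero at each $\lambda_{j}$ and no other zero. (If convenient one may also arrange the normalisations of Section~\ref{sec:model} by replacing $b$ with $\overline{\alpha}b$ for a suitable $\alpha\in\mathbb{T}$, which alters neither $\mathcal{H}(b)$ nor $a$.) The hypothesis $\dim(aH^{2})^{\perp}<\infty$ of Theorem~\ref{thm:finite_defect} is available here: it is part of the $\mathcal{D}(\mu)$ structure established in \cite{MR3110499}, and it also follows from Sarason's criterion recalled in the Remark after Theorem~\ref{thm:finite_defect}, since $a$ is a polynomial whose zeros all lie on $\mathbb{T}$.

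The heart of the argument is to show that the eigenvalues of $M_{z}^{*}$ restricted to $(aH^{2})^{\perp}$ are exactly $\overline{\lambda}_{1},\dots,\overline{\lambda}_{s}$. By Theorem~\ref{thm:finite_defect}(i) each such eigenvalue has the form $\overline{\lambda}$ with $\lambda\in\mathbb{T}$, and by Proposition~\ref{prop:eigenvalue} the (one-dimensional) eigenspace is spanned by the boundary reproducing kernel $k_{\lambda}^{b}\in\mathcal{H}(b)$, which therefore lies in $(aH^{2})^{\perp}$; consequently $0=\langle a,k_{\lambda}^{b}\rangle_{b}=a(\lambda)$, and as $a$ is a polynomial vanishing only at the $\lambda_{j}$ this forces $\lambda\in\{\lambda_{1},\dots,\lambda_{s}\}$. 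For the opposite inclusion I would fix $j$, observe that $a(\lambda_{j})=0$ forces $|b(\lambda_{j})|=1$, so that
\begin{equation*}
  k_{\lambda_{j}}^{b}(z)=\overline{b(\lambda_{j})}\,\frac{b(\lambda_{j})-b(z)}{1-\overline{\lambda_{j}}z}
\end{equation*}
is a polynomial; it thus lies in $\mathcal{H}(b)$, and a limiting argument from the interior (as in the proof of Proposition~\ref{prop:eigenvalue}) identifies it as an eigenvector of $M_{z}^{*}$ for $\overline{\lambda_{j}}$. Finally $k_{\lambda_{j}}^{b}\perp aH^{2}$, because for $h\in H^{2}$ one has $\langle ah,k_{\lambda_{j}}^{b}\rangle_{b}=(ah)(\lambda_{j})=0$: the polynomial $a$ vanishes to first order at $\lambda_{j}$, which along a Stolz angle dominates the admissible growth $|h(z)|=O\bigl((1-|z|^{2})^{-1/2}\bigr)$ of an $H^{2}$ function. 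Since the $k_{\lambda_{j}}^{b}$ are linearly independent, $\overline{\lambda}_{1},\dots,\overline{\lambda}_{s}$ are precisely the eigenvalues of $M_{z}^{*}$ on $(aH^{2})^{\perp}$.

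With this identification, Theorem~\ref{thm:finite_defect}(iii) applies verbatim and yields that $f\in\mathcal{D}(\mu)=\mathcal{H}(b)$ is cyclic if and only if $f$ is outer and $f(\lambda_{j})\neq 0$ for all $j$; here $f(\lambda_{j})$ is the non-tangential limit, which exists for every $f\in\mathcal{H}(b)$ by Theorem~\ref{thm:finite_defect}(ii) and coincides with the boundary value attached to $f$ in the definition of $\mathcal{D}(\mu)$. I expect the main obstacle to be the middle step: passing from the abstract spectral picture of $(aH^{2})^{\perp}$ to the concrete statement that its eigenvalues are the conjugates of the atoms of $\mu$. This forces one to combine the boundary-kernel description of eigenvectors (Proposition~\ref{prop:eigenvalue}) with the precise output of \cite{MR3110499}, and the verification that $k_{\lambda_{j}}^{b}\perp aH^{2}$ is the single point at which a genuine, if routine, boundary estimate is required.
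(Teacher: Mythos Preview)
Your proposal is correct and follows exactly the route the paper indicates: the paper's entire argument is the one sentence preceding the proposition, namely ``combine the identification $\mathcal{D}(\mu)=\mathcal{H}(b)$ from \cite{MR3110499} with Theorem~\ref{thm:finite_defect}.'' You have supplied the one detail the paper leaves implicit, namely that the eigenvalues of $M_{z}^{*}$ on $(aH^{2})^{\perp}$ coincide with the conjugates of the atoms $\lambda_{1},\dots,\lambda_{s}$ of $\mu$.

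Your argument for that identification is sound. The inclusion ``eigenvalues $\subset$ zeros of $a$'' is exactly the computation in the proof of Theorem~\ref{thm:finite_defect}(i). For the reverse inclusion, your observation that $k_{\lambda_{j}}^{b}$ is a polynomial (hence in $\mathcal{H}(b)$) is clean; the one place where you wave hands is in asserting that the limiting argument of Proposition~\ref{prop:eigenvalue} shows $k_{\lambda_{j}}^{b}$ is the boundary reproducing kernel and an $M_{z}^{*}$-eigenvector. This is true, and the quickest way to make it rigorous is to invoke the standard angular-derivative criterion (Sarason, \cite{MR1289670}, Chapter~V): $k_{\zeta}^{b}\in\mathcal{H}(b)$ with the reproducing property at $\zeta\in\mathbb{T}$ precisely when $b$ has a finite angular derivative there, which is automatic for rational $b$ at any point where $|b|=1$. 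Once that is in place, both the eigenvector identity $M_{z}^{*}k_{\lambda_{j}}^{b}=\overline{\lambda}_{j}k_{\lambda_{j}}^{b}$ and your orthogonality computation $\langle ah,k_{\lambda_{j}}^{b}\rangle_{b}=(ah)(\lambda_{j})=0$ go through verbatim, and the Stolz-angle estimate you give for the latter is correct.
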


We now consider a case with $\text{dim}(aH^{2})^{\perp} = \infty$ where we can give simple necessary and sufficient conditions for cyclicity. Let $\theta$ be a non-constant inner function and $b = (1+\theta)/2$. In the literature the choice $\theta = z$ is common. We remark that in this case, one does not have $b(0) = 0$. One can consider an equivalent version with $b(0) = 0$, however, to stay consistent with the literature we prefer to consider the non-normalized version. Since $b(0) \neq 0$ in the next Proposition it is important to recall that the Aleksandrov-Clark measures will not be probability measures in this case.

For an inner function $\theta$ we define the usual model space $K_{\theta} = H^{2} \cap \theta \overline{H^{2}_{0}}$.

\begin{proposition}\label{prop:1+theta/2}
    Let $\theta$ be a non-constant inner function and $b = (1+\theta)/2$. Then
    \begin{enumerate}[(i)]
        \item $\mathcal{H}(b) = \frac{1}{2}(1-\theta)H^{2} \oplus K_{\theta}$, where the orthogonal sum is taken with respect to the norm in $\mathcal{H}(b)$.
        \item All functions in $\mathcal{H}(b)$ have non-tangential limits $\sigma$-a.e., where $\sigma$ is the measure, such that
            \begin{equation*}
                \frac{1 - \lvert \theta \rvert^{2}}{\lvert 1 -\theta \rvert^{2}} = \int_{\mathbb{T}} \frac{1-\lvert z \rvert^{2}}{\lvert \zeta - z\rvert^{2}}d\sigma(\zeta).
            \end{equation*}
        \item Let $f \in \mathcal{H}(b)$ be an outer function. Then $f$ is cyclic if and only if $f$ is non-zero $\sigma$-a.e.
    \end{enumerate}
    \begin{proof}
        A simple computation gives
        \begin{equation*}
            \frac{1+b}{1-b} = \frac{3+\theta}{1-\theta} = 1 + 2\frac{1+\theta}{1-\theta}.
        \end{equation*}
        Taking real parts we see
        \begin{equation*}
            \frac{1 - \lvert b \rvert^{2}}{\lvert 1-b \rvert^{2}} = \int_{\mathbb{T}} \frac{1-\lvert z \rvert^{2}}{\lvert \zeta - z\rvert^{2}} d(m+2\sigma)(\zeta).
        \end{equation*}
        Thus $\mu = m + 2\sigma$ is the Aleksandrov-Clark measure of $b$ associated to the point $1$. Note that $\mu$ is not absolutely continuous and not of total mass $1$. It is easy to see that $a = (1-\theta)/2$, hence $\phi = a/(1-b) = 1$ and $J_{\phi} = \varnothing$. It follows from the Remark following Proposition \ref{prop:unitarymap} that
        \begin{equation*}
            \mathcal{H}(b) = VP^{2}(m+2\sigma) = VH^{2} \oplus VL^{2}(2\sigma) = \frac{1-\theta}{2}H^{2} \oplus K_{\theta}.
        \end{equation*}
        Thus part $(i)$ is proved. Part $(ii)$ follows from Poltoratski's Theorem on boundary convergence of normalized Cauchy transforms. Now let $f \in \mathcal{H}(b)$ be an outer function which is non-zero $\sigma$-a.e. Let $Vh \in \mathcal{H}(b)$ and suppose $\langle z^{n}f,Vh \rangle = 0$ for all $n \geq 0$. We must show $Vh \equiv 0$. We may suppose $Vh \in (aH^{2})^{\perp}$. Since $aH^{2} = \frac{1-\theta}{2}H^{2}$ it must be that $(aH^{2})^{\perp} = VL^{2}(2\sigma) = K_{\theta}$, and thus $Vh \in (aH^{2})^{\perp}$ is equivalent to $h \in L^{2}(\sigma)$. Since $f$ converges to $V^{-1}f$ non-tangentially $\sigma$-a.e. we see that $z^{n}f$ converges non-tangentially to $\zeta^{n}V^{-1}f$ $\sigma$-a.e. Thus
        \begin{equation*}
            0 = \langle z^{n}f, Vh \rangle_{b} = \int_{\mathbb{T}} \zeta^{n}V^{-1}f\overline{h}d\sigma(\zeta) \text{, for all } n \geq 0.
        \end{equation*}
        Since polynomials are dense in $L^{2}(\sigma)$ (the measure is singular) we have that $V^{-1}f\overline{h} = 0$ $\sigma$-a.e. Since $V^{-1}f$ is non-zero $\sigma$-a.e. we see $h = 0$ $\sigma$-a.e. and thus $Vh \equiv 0$. Conversely, if $V^{-1}f$ is not non-zero $\sigma$-a.e. it follows from the above computation that $f$ cannot be cyclic and hence the result is proved.
    \end{proof}
\end{proposition}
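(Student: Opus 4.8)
The plan is to first identify the Aleksandrov--Clark data attached to $b=(1+\theta)/2$, read off $(i)$ and $(ii)$ from the normalized Cauchy transform model of Section \ref{sec:model}, and then prove $(iii)$ by a duality argument against the singular measure $\sigma$. The starting computation is $\frac{1+b}{1-b}=\frac{3+\theta}{1-\theta}=1+2\,\frac{1+\theta}{1-\theta}$, where $\frac{1+\theta}{1-\theta}$ is the Herglotz transform of the Aleksandrov--Clark measure $\sigma$ of $\theta$ at the point $1$; taking real parts shows the Clark measure of $b$ at $1$ is $\mu_{1}=m+2\sigma$, with singular part $(\mu_{1})_{s}=2\sigma$. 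Comparing moduli on $\mathbb{T}$ gives $\lvert a\rvert=\tfrac12\lvert 1-\theta\rvert$, and $1-\theta$ is outer (write $1-\theta=2/(w+1)$ with $w=\frac{1+\theta}{1-\theta}\in N^{+}$ of positive real part, so $1/(w+1)\in H^{\infty}$ is zero free with outer denominator, hence outer); thus $a$ equals $\tfrac12(1-\theta)$ up to a unimodular constant, $\phi=a/(1-b)$ is a unimodular constant, and $J_{\phi}=\{0\}$. Since $\log\lvert a\rvert\in L^{1}$ this records that $b$ is non-extreme, so polynomials are dense in $\mathcal{H}(b)$ and $f$ is cyclic precisely when $[f]^{\perp}=\{0\}$.

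For $(i)$ and $(ii)$ I would invoke Proposition \ref{prop:unitarymap} and the remark after it: $V=V_{1}$ is unitary from $P^{2}(\mu_{1})$ onto $\mathcal{H}(b)$, and the splitting \eqref{eq:splitting} now reads $P^{2}(\mu_{1})=H^{2}/\phi\oplus L^{2}(2\sigma)=H^{2}\oplus L^{2}(2\sigma)$. The identity $af=V(f/\overline{\phi})$ from the proof of Theorem \ref{thm:model} gives $V(H^{2})=aH^{2}=\tfrac12(1-\theta)H^{2}$, which is therefore closed; and on $L^{2}(2\sigma)=L^{2}(\sigma)$ the formula for $V$, using $1-b=\tfrac12(1-\theta)$, reduces to the Clark unitary of $\theta$ at $1$, whose range is $K_{\theta}$ by the remark after Proposition \ref{prop:unitarymap}. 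Hence $\mathcal{H}(b)=\tfrac12(1-\theta)H^{2}\oplus K_{\theta}$ orthogonally and $(aH^{2})^{\perp}=K_{\theta}$, which is $(i)$; $(ii)$ is then immediate from Theorem \ref{thm:poltoratski}, since every $g=Vh\in\mathcal{H}(b)$ converges non-tangentially to $h=V^{-1}g$ at $(\mu_{1})_{s}$-a.e., hence $\sigma$-a.e., point.

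For $(iii)$ the key computation is the following. If $Vh\perp[f]$ with $f$ outer, then $aH^{2}\subset[f]$ by Proposition \ref{prop:ah2}, so we may assume $Vh\in(aH^{2})^{\perp}=K_{\theta}$, i.e. $h\in L^{2}(\sigma)$. Since $h$ is carried by $\sigma$ and, by Theorem \ref{thm:poltoratski}, $V^{-1}(z^{n}f)=\zeta^{n}(V^{-1}f)$ $\sigma$-a.e. (both equal the non-tangential boundary function of $z^{n}f$), the orthogonality relations $\langle z^{n}f,Vh\rangle_{b}=\langle V^{-1}(z^{n}f),h\rangle_{L^{2}(\mu_{1})}=0$ become
\begin{equation*}
  \int_{\mathbb{T}}\zeta^{n}\,(V^{-1}f)\,\overline{h}\,d\sigma=0,\qquad n\ge 0.
\end{equation*}
For sufficiency, if moreover $V^{-1}f\neq 0$ $\sigma$-a.e., then the finite measure $(V^{-1}f)\overline{h}\,d\sigma$ annihilates all $\zeta^{n}$ with $n\ge 0$ and is singular, hence vanishes by the F.\ and M.\ Riesz theorem (this is the mechanism behind the density of analytic polynomials in $L^{2}(\sigma)$); so $(V^{-1}f)\overline{h}=0$ and therefore $h=0$ $\sigma$-a.e., i.e. $Vh\equiv0$, proving $f$ is cyclic. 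For necessity, if $V^{-1}f$ vanishes on a Borel set $E$ with $\sigma(E)>0$, then $\mathbbm{1}_{E}$ is a nonzero element of $L^{2}(\sigma)$ giving a nonzero $V\mathbbm{1}_{E}\in(aH^{2})^{\perp}$ for which the displayed integrals all vanish, so $V\mathbbm{1}_{E}\perp[f]$ and $f$ is not cyclic.

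The algebra of the first two steps is routine once one grants that $1-\theta$ is outer. The step that requires genuine care is the identification in $(iii)$ of the Hilbert space pairing $\langle z^{n}f,Vh\rangle_{b}$ with the integral against the singular measure $\sigma$: this uses Poltoratski's boundary-convergence theorem to replace the abstract $L^{2}(\mu_{1})$-representative of $z^{n}f$ by its honest non-tangential boundary values $\sigma$-a.e., and then the structure of singular measures (F.\ and M.\ Riesz, or density of polynomials in $L^{2}(\sigma)$) to pass from the resulting moment conditions to pointwise vanishing. Everything else is bookkeeping with the model of Section \ref{sec:model}.
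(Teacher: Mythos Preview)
Your proof is correct and follows essentially the same route as the paper's: identify the Clark measure $\mu_{1}=m+2\sigma$, read off the orthogonal splitting via the normalized Cauchy transform, invoke Poltoratski for $(ii)$, and for $(iii)$ reduce to $h\in L^{2}(\sigma)$ and kill $(V^{-1}f)\overline{h}$ using that analytic polynomials span $L^{2}$ of a singular measure (your appeal to F.\ and M.\ Riesz is the same mechanism). One small caveat: you cite the identity $af=V(f/\overline{\phi})$ from the proof of Theorem \ref{thm:model}, but that theorem was established under the standing assumptions $b(0)=0$ and $\mu_{1}$ absolutely continuous, neither of which holds here; the identity you need is nonetheless immediate by direct computation since $\phi$ is constant, so this is a citation issue rather than a mathematical one.
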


\bibliography{citations}

\end{document}